\DeclareMathAlphabet{\mathpzc}{OT1}{pzc}{m}{it}
\DeclareMathAlphabet{\mathpzc}{OT1}{pzc}{m}{it}
\newtheorem{thm}{Theorem}[section]
\newtheorem{lem}[thm]{Lemma}
\newtheorem{prop}[thm]{Proposition} 
\newtheorem{cor}[thm]{Corollary}
\newtheorem{rem}[thm]{Remark}
\newcommand{\bZ}{\mathbb Z}
\newcommand{\Z}{\mathbb Z}
\newcommand{\bQ}{\mathbb Q}
\newcommand{\bC}{\mathbb C}
\newcommand{\A}{\mathbb A}
\newcommand{\X}{X_1,\dots,X_m}
\newcommand{\mi}{1\leqslant i \leqslant m}
\newcommand{\td}{\operatorname{tr.deg}}
\newcommand{\End}{\operatorname{End}}
\newcommand{\Aut}{\operatorname{Aut}}
\newcommand{\gr}{\operatorname{gr}}
\newcommand{\dk}{\operatorname{DK}}
\newcommand{\ml}{\operatorname{ML}}
\title{ A new family of counterexamples to the Zariski Cancellation Problem in positive characteristic
	
	\author{Parnashree Ghosh$^{*}$ and
		Ananya Pal$^{**}$\\
		{\small{\it  Theoretical Statistics and Mathematics  Unit, Indian Statistical Institute,}}\\
		{\small{\it 203 B.T.Road, Kolkata-700108, India}}\\
		{\small{\it e-mail :
				$^{*}$parnashree$\_$r@isical.ac.in, ghoshparnashree@gmail.com}}\\
		{\small {\it e-mail : $^{**}$palananya1995@gmail.com }}
	}
}
\begin{document}
	\date{}
	\maketitle
		\abstract{In this paper, over a field of positive characteristic we exhibit an infinite family of counter examples to the Zariski Cancellation Problem in higher dimensions ($\geqslant 3$) which are pairwise non-isomorphic and also non-isomorphic to the existing family of counter examples, demonstrated by Gupta in \cite{adv}.
	}
	\smallskip

\noindent
{\small {{\bf Keywords}. Polynomial ring, 
		 Zariski Cancellation Problem, Derksen invariant, Makar-Limanov invariant. }}
\smallskip

\noindent
{\small {{\bf 2020 MSC}. Primary: 14R10; Secondary: 13B25, 13A50, 13A02}}

	\section{Introduction}
	Throughout this paper $k$ will denote a field. For an integral domain $R$, $R^{[n]}$ denotes a polynomial ring in $n$ variables over $R$. We begin with one of the fundamental problems in the area of Affine Algebraic Geometry, namely the Zariski Cancellation Problem (ZCP).
	
	\medskip
	\noindent
	{\bf Question 1}: For an integer $n \geqslant 1$, is $k^{[n]}$ cancellative? That is if $B$ be an affine domain such that $B^{[1]}=k^{[n+1]}$, then does this imply that $B=k^{[n]}$? 
	
	\smallskip
	The answer to the above question is affirmative for $n=1$ (cf. \cite{aeh}). For $n=2$, when $k$ is a field of characteristic zero, the cancellative property of $k^{[2]}$ is proved by Miyanishi, Sugie and Fujita (\cite{miyasug}, \cite{fuj}). 
	This result is extended over perfect fields by Russell in \cite{russ}. 
	However, in \cite{bhgu}, Bhatwadekar and Gupta have dropped the condition of perfect fields and extended the result over arbitrary fields.
	Thus proving the cancellative properly of $k^{[2]}$, for any field.

	We now discuss the case for $n \geqslant 3$. When $k$ is a field of characteristic zero, the problem is still open. However, potential candidate for counterexample to this problem in characteristic zero is given by the Russell-Koras threefold whose coordinate ring is given by $k[X,Y,Z,T]/(X^2Y-X-Z^2-T^3)$. However, over fields of positive characteristic, the work of Gupta in \cite{inv} and \cite{adv} produced counterexamples ZCP for all $n \geqslant 3$. She has considered varieties whose coordinate rings are given by 
	\begin{equation}\label{A1}
		B= \dfrac{k[X_1,\ldots,X_m,Y,Z,T]}{(X_1^{r_1}\cdots X_m^{r_m}Y-f(Z,T))},
	\end{equation}
	where $k[Z,T]/(f)=k^{[1]}$, i.e., $f$ is a line in $k[Z,T]$. For $m=1$, such rings were introduced by Asanuma as an illustration of non-trivial $\A^2$-fibration over a DVR not containing $\bQ$. 
	These varieties are contained in a larger class of varieties called ``Generalised Asanuma varieties" which are well studied in \cite{adv2} and \cite{GG}.
	In \cite{adv}, Gupta has shown that $B^{[1]}=k^{[m+3]}$ but $B \neq k^{[m+2]}$ whenever $f$ is a {\it non-trivial line}, i.e., $k[Z,T]/(f)=k^{[1]}$ but $k[Z,T] \neq k[f]^{[1]}$.
	Since non-trivial lines exist only over fields of positive characteristic (cf. \cite{AM}, \cite{Suz}, \cite{Se}, \cite{Na}), the rings of the form $B$ produced counterexamples to ZCP over fields of positive characteristic. Such rings were further studied in \cite{adv2} and an infinite family of pairwise non-isomorphic varieties having coordinate rings of type \eqref{A1} are obtained, which are counterexamples to the ZCP in positive characteristic. 
	
	In this paper we study varieties whose coordinate rings are given by the following equation.  
	\begin{equation}\label{A}
		A:=\dfrac{k[X_1,\ldots,X_m,Y,Z,T]}{(\alpha(X_1,\ldots,X_m)Y-F(X_1,\ldots,X_m,Z,T))},
	\end{equation}
	such that $\alpha \notin k$ and either $\deg_Z F \geqslant 1$ or $\deg_T F \geqslant 1$. Let $x_1,\ldots,x_m,y,z,t$ denote the images of $X_1,\ldots,X_m,Y,Z,T$ in $A$.
	When $\alpha(X_1,\ldots,X_m)=a_1(X_1) \cdots a_m(X_m)$ and $F=f(Z,T)$, we denote the corresponding rings as $A_{(a_1,\ldots,a_m)}$, that is
	$$
	A_{(a_1,\ldots,a_m)} := \dfrac{k[X_1,\ldots,X_m,Y,Z,T]}{(a_1(X_1)\cdots a_m(X_m)Y-f(Z,T))}.
	$$
	We show that when $k$ is a field of positive characteristic and $f$ is a non-trivial line in $k[Z,T]$, the family
	$$
	\left\{ A_{(a_1,\ldots,a_m)} \mid a_1,\ldots, a_m \in k^{[1]} \right\}
	$$ 
	contains an infinite subfamily of pairwise non-isomorphic rings which are counterexamples to the ZCP but none of them is isomorphic to the varieties of type \eqref{A1}. Thus we produce a new family of counterexamples to the ZCP over fields of positive characteristic (\thref{zcp2}).    
	
	In the next section we will record some basic facts on exponential maps on $k$-algebras and recall some known results which will be used later in this paper.
	
	\section{Preliminaries}	
	
	We begin this section with the basic concept of exponential maps on $k$-algebras.
	
	\medskip
	\noindent
	{\bf Definition.}
	Let $B$ be a $k$-algebra and $\phi_U: B \rightarrow B[U]$ be a $k$-algebra homomorphism. We say that $\phi = \phi_U$ is an {\it exponential map} on $B$, if $\phi$ satisfies the following two properties:
	\begin{enumerate}[\rm(i)]
		\item $\varepsilon_0\phi_U=id_B$, where $\varepsilon_0: B[U]\rightarrow B$ is the evaluation map at $U = 0$.
		
		\item $\phi_V\phi_U = \phi_{V+U}$; where $\phi_V~: B\rightarrow B[V]$ is extended to a $k$-algebra homomorphism $\phi_V:B[U]\rightarrow B[U,V]$ by defining $\phi_V(U) = U$.	
	\end{enumerate}

	Given an exponential map $\phi$ on a $k$-algebra $B$, the ring of invariants of $\phi$ is a subring of $B$ given by 
	\begin{center}
		$B^{\phi} = \left\{ a\in B\,|\, \phi(a) = a\right\}$.
	\end{center}
	An exponential map $\phi$ on $B$ is said to be non-trivial if $B^{\phi} \ne B$. We denote the set of all exponential maps on $B$ by EXP$(B)$.
	The 
	$Derksen$ $invariant$ of $B$ is a subring of $B$ defined by
	\begin{center}
		$\dk(B) = k[B^{\phi}\mid \phi$ is a  non-trivial exponential map on $B]$
	\end{center}
	and the 
	{\it Makar-Limanov invariant} of $B$ is a subring of $B$ defined by
	\begin{center}
		$\ml(B) = \bigcap\limits_{\phi \in \text{EXP}(B)}B^{\phi}$.
	\end{center} 
	
	We list below some useful properties of exponential maps (cf. \cite[Chapter I]{miya}, \cite{cra}, \cite{inv})  on a $k$-domain.
	
	\begin{lem}\thlabel{lemma : properties}
		Let $\phi$ be a non-trivial exponential map on a $k$-domain $B$. Then the following holds: 	\begin{enumerate}[\rm(i)]			
			\item $B^{\phi}$ is factorially closed in $B$ i.e., for any $a,b\in B\setminus\{0\}$, if $ab\in B^{\phi}$ then $a,b\in B^{\phi}$. Hence, $B^{\phi}$ is algebraically closed in $B$.
			
			\item $\td_k(B^{\phi})$ = $\td_k(B)-1$.
			
			\item Let $S$ be a multiplicative closed subset of $B^{\phi}\setminus \{0\}$. Then $\phi$ extends to a non-trivial exponential map $S^{-1}\phi$ on $S^{-1}B$ defined by $S^{-1}\phi(a/s) = \phi(a)/s$, for all $a\in B$ and $s\in S$. Moreover, the ring of invariants of $S^{-1}\phi$ is $S^{-1}(B^{\phi})$.
			
			\item 
			$\phi$ extends to a non-trivial exponential map ${\phi}\otimes id$ on $B\otimes_k\overline{k}$ such that $(B\otimes_k \overline{k})^{{\phi}\otimes id} = B^{\phi} \otimes_k \overline{k}$.
		\end{enumerate}	
	\end{lem}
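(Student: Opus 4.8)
The plan is to build everything on the \emph{degree function} attached to $\phi$. For $0\neq a\in B$ write $\phi(a)=\sum_{i=0}^{n}a_iU^i$ with $a_i\in B$ and $a_n\neq0$, and set $\deg_\phi(a)=n$ (with $\deg_\phi(0)=-\infty$); axiom (i) gives $a_0=a$, so $\deg_\phi(a)\geqslant0$ and $a\in B^\phi$ exactly when $\deg_\phi(a)\leqslant0$. Two facts then drive the whole lemma. First, since $B$ is a domain, comparing top coefficients shows $\deg_\phi(ab)=\deg_\phi(a)+\deg_\phi(b)$, so $\deg_\phi$ is a genuine degree function, with $\deg_\phi(a+b)\leqslant\max\{\deg_\phi a,\deg_\phi b\}$. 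Second, writing $\pi_\phi(a)=a_n$ for the \emph{leading coefficient}, the cocycle identity $\phi_V\phi_U=\phi_{U+V}$ read off in top $U$-degree forces $\phi_V(a_n)=a_n$, so $\pi_\phi(a)\in B^\phi$ for \emph{every} $a$. Part (i) is then immediate: if $a,b\neq0$ and $ab\in B^\phi$ then $0=\deg_\phi(a)+\deg_\phi(b)$ with both summands $\geqslant0$, forcing $a,b\in B^\phi$; and if $b\notin B^\phi$ satisfied a nontrivial relation $\sum_{i=0}^{n}c_ib^i=0$ with $c_i\in B^\phi$, $c_n\neq0$, $n\geqslant1$, then $c_nb^n$ would have degree $n\deg_\phi(b)$ strictly larger than $\deg_\phi\bigl(\sum_{i<n}c_ib^i\bigr)$, contradicting that the sum vanishes; hence $B^\phi$ is algebraically closed in $B$.

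Parts (iii) and (iv) I expect to be formal. For (iii), the formula $S^{-1}\phi(a/s)=\phi(a)/s$ is well defined because every $s\in S\subseteq B^\phi$ is $\phi$-fixed, so $as'=a's$ yields $\phi(a)s'=\phi(a')s$; the two exponential axioms pass to $S^{-1}B$, nontriviality survives the inclusion $B\hookrightarrow S^{-1}B$, and $c/s$ is fixed iff $\phi(c)=c$, giving $(S^{-1}B)^{S^{-1}\phi}=S^{-1}(B^\phi)$. For (iv) I would use $\phi\otimes\mathrm{id}$ on $B\otimes_k\overline{k}$, noting $(B\otimes_k\overline k)[U]=B[U]\otimes_k\overline k$; the axioms hold after tensoring, and since $B^\phi=\ker(\phi-\iota\colon B\to B[U])$ and $\overline k$ is free, hence flat, over $k$, kernels commute with the base change, so $(B\otimes_k\overline k)^{\phi\otimes\mathrm{id}}=B^\phi\otimes_k\overline k$.

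Part (ii) is the substantive one. Using (iii) with $S=B^\phi\setminus\{0\}$ reduces it to the following: if a domain $C$ over the \emph{field} $L=\operatorname{Frac}(B^\phi)$ carries a nontrivial exponential map $\psi$ with $C^\psi=L$, then $\td_L C=1$; additivity of transcendence degree in $k\subseteq L\subseteq\operatorname{Frac}(C)$, with $\td_k C=\td_k B$ and $\td_k L=\td_k B^\phi$, then delivers $\td_k B=\td_k B^\phi+1$. The bound $\td_L C\geqslant1$ is immediate from (i), since any $b\in C\setminus L$ is transcendental over $L$. For $\td_L C\leqslant1$ I would pass to the graded domain $\gr_\phi C=\bigoplus_{d\geqslant0}F_d/F_{d-1}$, $F_d=\{c:\deg_\psi c\leqslant d\}$, with degree-zero part $L$: the leading-coefficient fact shows that if $u,v$ have equal degree $d$ with leading coefficients $\alpha,\beta\in L^{\ast}$ then $\beta u-\alpha v$ has smaller degree, so their symbols are $L$-proportional and each graded piece is at most one-dimensional over $L$; taking $s$ a nonzero symbol of minimal positive degree $n$, every homogeneous $g$ of degree $d$ satisfies $g^{\,n}=\mu s^{\,d}$ for some $\mu\in L$, whence $\gr_\phi C$ is algebraic over $L[s]$ and $\td_L\gr_\phi C=1$. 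The main obstacle is the final comparison $\td_L C=\td_L\gr_\phi C$: algebraically independent elements of $C$ need not have independent symbols, so one must argue by selecting a polynomial expression whose leading form realises a dependence and then descending on $\deg_\psi$. This filtered-to-graded transcendence-degree comparison is precisely the classical input recorded in \cite[Chapter I]{miya}, \cite{cra} and \cite{inv}, which I would invoke rather than reprove.
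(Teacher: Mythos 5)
The paper never proves this lemma: it is recorded as a list of known properties with citations to \cite[Chapter I]{miya}, \cite{cra} and \cite{inv}, so there is no in-paper argument to compare yours against --- what you have written is a reconstruction of the standard proofs from exactly that literature. Your parts (i), (iii) and (iv) are correct and complete: additivity of $\deg_\phi$ in a domain, the $\phi$-invariance of the leading coefficient read off from $\phi_V\phi_U=\phi_{V+U}$ in top $U$-degree, the well-definedness of the localized map using $\phi(s)=s$, and the flatness/kernel argument identifying $(B\otimes_k\overline{k})^{\phi\otimes \mathrm{id}}$ with $B^\phi\otimes_k\overline{k}$ are precisely the standard devices, and I see no errors in them.

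For (ii), your reduction to the localized situation ($L=\operatorname{Frac}(B^\phi)$, $C=S^{-1}B$, additivity of transcendence degree) and your proof that each graded piece $F_d/F_{d-1}$ is at most one-dimensional over $L$ are both correct. The one step you defer to the references --- the comparison $\operatorname{tr.deg}_L C=\operatorname{tr.deg}_L\operatorname{gr}C$ --- is legitimate to cite (the paper cites the same sources for the entire lemma), but it is also unnecessary: what you have already proved closes it. One-dimensionality of the graded pieces gives $\dim_L F_d\leqslant d+1$ for every $d$, since $F_{-1}=\{0\}$ and $F_0=L$. Now if $u,v\in C$ were algebraically independent over $L$, the monomials $u^iv^j$ with $i+j\leqslant N$ would be $\binom{N+2}{2}$ elements of $F_{NM}$, where $M=\max\{\deg_\psi u,\deg_\psi v\}$, and they are linearly independent over $L$ because a nontrivial $L$-linear relation among them is a nontrivial polynomial relation between $u$ and $v$. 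Since $\binom{N+2}{2}$ grows quadratically in $N$ while $\dim_L F_{NM}\leqslant NM+1$ grows linearly, this is a contradiction for $N\gg 0$; hence $\operatorname{tr.deg}_L C\leqslant 1$ directly. With this counting step you can drop the element $s$ and the passage to $\operatorname{gr}$ altogether, and your proof of (ii) --- and hence of the whole lemma --- is self-contained.
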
	
	
	We define below a {\it proper} and {\it admissible $\bZ$-filtration} on a $k$-domain $B$.
	
	\medskip
	\noindent
	{\bf Definition.} 
	A collection $\{ B_n\mid n\in \bZ \}$ of $k$-linear subspaces of $B$ is said to be a {\it proper $\bZ$-filtration} if the following properties hold:
	\begin{enumerate}[\rm(i)]
		\item $B_n\subseteq B_{n+1}$ for every $n\in \bZ$.
		
		\item $B = \cup_{n \in \bZ}B_n$.
		
		\item $\cap_{n\in \bZ} B_{n} = \{ 0 \}$.
		
		\item $(B_n \setminus B_{n-1})(B_m \setminus B_{m-1})\subseteq B_{m+n}\setminus B _{m+n -1}$ for all $m,n\in \bZ$.
	\end{enumerate}

	Any proper $\Z$-filtration $\{B_n\}_{n\in \Z}$ on $B$ determines an  {\it associated $\Z$-graded integral domain}
	\begin{center}
		$\gr(B):= \bigoplus\limits_{n\in \Z} \frac{B_n}{B_{n-1}}$
	\end{center}
	and there exists a natural map $\rho: B \rightarrow \gr(B)$ defined by $\rho(a) = a + B_{n-1}$, if $a\in B_n\setminus B_{n-1}$.
	
	\medskip
	\noindent
	{\bf Definition.}	
	A proper $\Z$-filtration $\{B_n\}_{n\in \Z}$ on an affine $k$-domain $B$ is said to be {\it admissible} if there exists a finite generating set $\Gamma$ of $B$ such that, for any $n\in\Z$ and $a\in B_n$, $a$ can be written as a finite sum of monomials in $k[\Gamma] \cap B_n$. 
	
	\smallskip
	
	We now record a theorem on homogenization of exponential map by H. Derksen, O. Hadas and L. Makar-Limanov  \cite{dom}. The following version can be found in \cite[Theorem 2.6]{cra}.
	
	\begin{thm}\thlabel{dhm}
		Let $B$ be an affine $k$-domain with an admissible proper $\mathbb{Z}$-filtration and $gr(B)$ be the induced associated $\mathbb{Z}$-graded domain. Let $\phi$ be a non-trivial exponential map on $B$, then $\phi$
		induces a non-trivial, homogeneous exponential map $\overline{\phi}$ on $gr(B)$ such that $\rho(B^{\phi}) \subseteq gr(B)^{\overline{\phi}}$, where $\rho: B \rightarrow \gr(B)$ denotes the natural map.
	\end{thm}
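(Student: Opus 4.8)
The plan is to realise $\overline{\phi}$ as the homomorphism induced by $\phi$ on associated graded rings, after extending the given filtration on $B$ to a compatible filtration on $B[U]$ in which $U$ is assigned the correct weight. Write $\deg$ for the degree function attached to the filtration $\{B_n\}$ (so $\deg a = n$ when $a \in B_n \setminus B_{n-1}$, and $\deg 0 = -\infty$), and for $a \in B$ expand $\phi(a) = \sum_{i \geqslant 0} \phi_i(a) U^i$ as a finite sum, where each $\phi_i : B \to B$ is $k$-linear and $\phi_0 = \mathrm{id}_B$. Since $\phi$ is non-trivial and is determined by its values on a finite generating set $\Gamma$ witnessing admissibility, there is some $\gamma \in \Gamma$ and some $i \geqslant 1$ with $\phi_i(\gamma) \neq 0$.

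First I would define the weight of $U$ by
\[
\delta := \min \left\{ \frac{\deg \gamma - \deg \phi_i(\gamma)}{i} \;:\; \gamma \in \Gamma,\ i \geqslant 1,\ \phi_i(\gamma) \neq 0 \right\},
\]
a minimum over a finite nonempty set, hence well defined (after rescaling the filtration by the denominator of $\delta$ we may assume $\delta \in \bZ$). I would then extend the filtration to $B[U]$ by declaring $\deg U = \delta$, i.e. $(B[U])_n = \bigoplus_j B_{n - j\delta}\, U^j$; a direct computation gives $\gr(B[U]) = \gr(B)[U]$ with $U$ homogeneous of degree $\delta$. The crucial point is that $\phi$ is then a filtered homomorphism, $\phi(B_n) \subseteq (B[U])_n$. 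This is where admissibility enters: by the choice of $\delta$ each generator satisfies $\phi(\gamma) \in (B[U])_{\deg \gamma}$, and since the filtration on $B[U]$ is multiplicative this passes to every monomial in $k[\Gamma]$, and then to every $a \in B_n$ because admissibility writes $a$ as a finite sum of such monomials lying in $B_n$.

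Once $\phi$ is filtered, functoriality of the associated-graded construction furnishes a graded $k$-algebra homomorphism $\overline{\phi} = \gr(\phi) : \gr(B) \to \gr(B)[U]$, characterised by $\overline{\phi}(\rho(a)) = \rho_{B[U]}(\phi(a))$, the top-degree part of $\phi(a)$. I would then verify the exponential axioms: the identity $\varepsilon_0 \overline{\phi} = \mathrm{id}$ holds because $\phi_0 = \mathrm{id}$ forces the $U^0$-term of $\phi(a)$ to attain the top degree $\deg a$, so setting $U = 0$ returns $\rho(a)$; and the co-associativity $\overline{\phi}_V \overline{\phi}_U = \overline{\phi}_{U+V}$ follows by applying $\gr$ to the corresponding identity for $\phi$, after putting the weight-$\delta$ filtration on $B[U,V]$ and checking the relevant maps are filtered. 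Non-triviality is built into the definition of $\delta$: for the generator $\gamma$ and index $i \geqslant 1$ achieving the minimum, the term $\rho(\phi_i(\gamma))\,U^i$ survives in the top-degree part, so $\overline{\phi}(\rho(\gamma)) \neq \rho(\gamma)$. Finally, if $a \in B^{\phi}$ then $\phi(a) = a$ has top-degree part $\rho(a)$, whence $\overline{\phi}(\rho(a)) = \rho(a)$ and $\rho(a) \in \gr(B)^{\overline{\phi}}$, giving $\rho(B^{\phi}) \subseteq \gr(B)^{\overline{\phi}}$.

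The main obstacle is the well-definedness of the weight $\delta$ together with the proof that $\phi$ becomes filtered: a priori the ratios $(\deg a - \deg\phi_i(a))/i$ taken over all $a \in B$ need not be bounded below, and it is precisely the admissibility hypothesis — together with property (iv) of the proper filtration, which guarantees that $\gr(B)$ is a domain and that $\rho$ is multiplicative — that lets me compute $\delta$ on the finite set $\Gamma$ and then propagate the filtered property multiplicatively to all of $B$. The remaining verifications, namely the exponential axioms and the invariant inclusion, are then formal consequences of the functoriality of $\gr$.
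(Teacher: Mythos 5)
The paper itself gives no proof of this theorem: it is recorded as a known result, quoted from \cite{dom} in the form given in \cite[Theorem 2.6]{cra}. Your argument is correct and is essentially the standard proof in those cited sources — assign $U$ the extremal weight $\delta$ computed over the finite generating set, use admissibility (plus multiplicativity of the filtration) to show $\phi$ is filtered, and pass to associated graded rings — and you handle the two delicate points properly: the possible non-integrality of $\delta$ (fixed by rescaling the filtration) and the non-triviality of $\overline{\phi}$ (the term $\rho(\phi_i(\gamma))U^i$ of the minimizing generator survives in top degree).
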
	
	
	Next we quote some important results from \cite{{genadv}} which will be used in this paper.
	\begin{thm}\thlabel{Admissible1}
		Let $R$ be an affine UFD over a field $k$ and
		$$
		A_R:=\dfrac{R[X,Y,Z,T]}{(\alpha(X)X^dY-F(X,Z,T))}\text{ with }\alpha(0) \neq 0 \text{ and }F(0,Z,T)\neq 0.
		$$ 
		Let $x,y,z,t$ denote the images of $X,Y,Z,T$ in $A_R$ respectively.
		Let $w_R$ be a degree function on $A_R$ such that $w_R(r)=0$ for every $r \in R$, $w_R(x)=-1$, $w_R(y)=d$ and $w_R(z)=w_R(t)=0$. 
		Suppose $\gcd_{R[Z,T]}(\alpha(0), F(0,Z,T))=1$, 
		then for any generating set $\{c_1,\dots,c_n\}$ of the affine $k$-algebra $R$, $w_R$ induces an admissible $\Z$-filtration on $A_R$ with respect to the generating set $\{c_1,\dots,c_n,x,y,z,t\}$ such that
		$$gr(A_R)\cong \dfrac{R[X,Y,Z,T]}{(\alpha(0)X^dY-F(0,Z,T))}.$$
	\end{thm}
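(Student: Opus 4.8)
The strategy is to present $A_R$ as a quotient of $P := R[X,Y,Z,T]$ and to read the filtration determined by $w_R$ off a genuine $\Z$-grading on $P$. Grade $P$ by $w(X)=-1$, $w(Y)=d$, $w(Z)=w(T)=0$ and $w(r)=0$ for all $r\in R$; as $R$ is generated over $k$ by the weight-zero elements $c_1,\dots,c_n$, each monomial $rX^aY^bZ^cT^e$ is $w$-homogeneous of weight $bd-a$, so $P=\bigoplus_{m\in\Z}P^{(m)}$ is a $\Z$-graded domain with $\gr(P)=P$. Put $P_{\le N}:=\bigoplus_{m\le N}P^{(m)}$, $h:=\alpha(X)X^dY-F(X,Z,T)$ and $I:=hP$. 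The filtration $(A_R)_N:=(P_{\le N}+I)/I$ on $A_R=P/I$ is exactly the one induced by $w_R$, and the standard description of the associated graded of a quotient filtration gives $\gr(A_R)\cong\gr(P)/\mathrm{in}(I)=P/\mathrm{in}(I)$. Everything therefore reduces to computing the initial ideal $\mathrm{in}(I)$.

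First I would compute $\mathrm{in}(h)$. Writing $\alpha(X)=\alpha(0)+X\alpha_1(X)$ and $F(X,Z,T)=F(0,Z,T)+XF_1(X,Z,T)$, the monomials $\alpha(0)X^dY$ and $F(0,Z,T)$ have weight $0$, while every other monomial of $h$ carries an extra positive power of $X$, hence weight $<0$. Since $\alpha(0)\neq0$, $F(0,Z,T)\neq0$ and these two terms cannot cancel (one involves $Y$, the other does not), $w(h)=0$ and $\mathrm{in}(h)=h_0:=\alpha(0)X^dY-F(0,Z,T)$. As $P$ is a graded domain and $I$ is principal, for nonzero $p\in P$ the top-weight part of $ph$ is $\mathrm{in}(p)\,h_0\neq0$, so $\mathrm{in}(ph)=\mathrm{in}(p)\,h_0$ with no cancellation; hence $\mathrm{in}(I)=(h_0)$ and
\[
\gr(A_R)\;\cong\;\frac{P}{(h_0)}\;=\;\frac{R[X,Y,Z,T]}{\big(\alpha(0)X^dY-F(0,Z,T)\big)},
\]
the asserted graded ring.

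Next I would invoke the hypothesis $\gcd_{R[Z,T]}(\alpha(0),F(0,Z,T))=1$ to make this filtration \emph{proper}. In the UFD $R[X,Z,T]$ it yields $\gcd(\alpha(0)X^d,F(0,Z,T))=1$, since the only prime dividing $X^d$ is $X$, which does not divide the nonzero $X$-free element $F(0,Z,T)$, while a prime of $R$ dividing $\alpha(0)$ would have to divide $F(0,Z,T)$, contradicting the hypothesis. Thus $h_0$ is primitive of degree one in $Y$, hence irreducible, and $\gr(A_R)\cong P/(h_0)$ is a domain --- this is filtration axiom (iv). Axioms (i), (ii) are clear; for separatedness (iii), note that since $A_R$ is a domain one has $\mathrm{Frac}(A_R)=\mathrm{Frac}(R)(x,z,t)$ (because $y=F(x,z,t)/(\alpha(x)x^d)$ lies there), and the weights extend to the valuation $w=-v_{(x)}$ on this field; as $w(a)\le w_R(a)$ and $w(a)$ is finite for every nonzero $a$, we get $w_R(a)>-\infty$, so $\bigcap_N(A_R)_N=0$. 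Admissibility with respect to $\Gamma=\{c_1,\dots,c_n,x,y,z,t\}$ is then immediate: any $a\in(A_R)_N$ lifts to $\tilde a\in P_{\le N}$, and expanding its $R$-coefficients in $c_1,\dots,c_n$ writes $\tilde a$ as a $k$-combination of monomials of weight $\le N$ whose images lie in $k[\Gamma]\cap(A_R)_N$.

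The genuinely delicate point is the step asserting $\gr(A_R)=P/(h_0)$ is \emph{integral}: the no-cancellation computation of $\mathrm{in}(I)$ is formal, but it is the gcd hypothesis, through the primality of $h_0$, that certifies $\gr(A_R)$ is a domain and hence that $w_R$ defines a proper, admissible filtration. I would expect this primality/irreducibility verification to be the crux, with the remaining bookkeeping routine.
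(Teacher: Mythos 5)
A preliminary remark on the comparison itself: the paper contains no proof of \thref{Admissible1} --- it is quoted from the preprint \cite{genadv} --- so your proposal can only be judged on its own merits and against the argument this statement is known to require. On those terms your proof is correct and is surely the intended one: put the weight grading on $P=R[X,Y,Z,T]$, give $A_R=P/(h)$ the image filtration, use that the initial ideal of a principal ideal in a graded domain is generated by the initial form $h_0=\alpha(0)X^dY-F(0,Z,T)$ to get $\gr(A_R)\cong P/(h_0)$, use the gcd hypothesis to see that $h_0$ is a primitive degree-one polynomial in $Y$ over the UFD $R[X,Z,T]$, hence prime, so that $\gr(A_R)$ is a domain (axiom (iv)), and finish with the $x$-adic valuation for separatedness and expansion of $R$-coefficients in $c_1,\dots,c_n$ for admissibility. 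Each of these steps checks out.

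Two assertions you present as immediate deserve qualification, though neither undermines the core argument. \emph{First}, your claim that the image filtration ``is exactly the one induced by $w_R$'' is not a fact about an arbitrary degree function $w_R$ with the prescribed values on generators; only the inclusion $(P_{\leqslant N}+I)/I\subseteq\{a\in A_R : w_R(a)\leqslant N\}$ is automatic, and the reverse inclusion can genuinely fail. For instance, for $A_R=k[X,Y,Z,T]/(XY-Z)=k[x,y,t]$, the negative of the monomial valuation in the coordinates $x,\,1+xy,\,t$ (with values $1,1,0$) is a degree function taking the prescribed values on $x,y,z,t$, yet its filtration places $1+z$ in degree $-1$, so that $\rho(x)\rho(y)=\rho(z)=-1$ in its associated graded ring; that ring has non-constant units and cannot be $k[X,Y,Z,T]/(XY-Z)$. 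So the theorem is only true of the canonical degree function $w_R(a)=\min\{\text{top weight of a lift of }a\}$, i.e.\ of precisely the filtration you construct; the identification should be stated as the interpretation of the theorem, not derived as a fact about every such $w_R$. \emph{Second}, your separatedness argument invokes ``since $A_R$ is a domain,'' which does not follow from the displayed hypotheses: with $\alpha=1-X$ and $F=(1-X)Z$ all stated conditions hold, yet $u=x^dy-z$ satisfies $u=xu$, hence lies in every $(A_R)_N$ and axiom (iii) fails. Domain-ness is implicit in the assumption that a degree function on $A_R$ exists at all (equivalently, that $h$ is prime), so your use of it is legitimate, but it is a hidden hypothesis worth making explicit rather than a consequence of the algebraic conditions on $\alpha$ and $F$.
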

	
	We now quote special cases of two results from \cite{genadv} which are going to be used in the subsequent section of this paper. 
	We first record the following result (\cite[Proposition 4.20]{genadv}). 
	
	\begin{prop}\thlabel{lin2}
		Let
		 $$A= \dfrac{k[X_1,\ldots,X_m,Y,Z,T]}{(X_1^{r_1}\cdots X_m^{r_m} \alpha_1(X_1)\cdots \alpha_m(X_m)Y-f(Z,T)-X_1 h(X_1,\ldots,X_m,Z,T))}$$
		 be an affine domain such that $r_i>1$, $\alpha_i(0) \neq 0$ for every $i$, $1 \leqslant i \leqslant m$ and for some $h \in k^{[m+2]}$. 
		Let $w_1$ be the degree function on $A$ given by $w_1(x_1)=-1, w_1(y)= r_1,w_1(x_i)= w_1(z) = w_1(t) =0,\, 2\leqslant i\leqslant m$.
		If $\dk(A)$ contains an element with positive $w_1$-value (in particular, if $\dk(A)=A$), then the following statements hold.
		\begin{enumerate}[\rm(i)]
			\item When $m=1$ or $k$ is an infinite field then there exist a system of coordinates $Z_1,T_1$ of $k[Z,T]$ and $a_0,a_1 \in k^{[1]}$, such that $f(Z,T)=a_0(Z_1)+a_1(Z_1)T_1$. 
			\item When $f$ is a line in $k[Z,T]$, then $k[Z,T]=k[f]^{[1]}$.
		\end{enumerate}
	\end{prop}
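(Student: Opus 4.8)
The plan is to homogenize $A$ with respect to $w_1$, reduce to a one-variable Asanuma ring over the field $k(X_2,\dots,X_m)$, and there read off the shape of $f$ from the known Makar--Limanov and Derksen computations. First I would make $\gr(A)$ explicit. Setting $R=k[X_2,\dots,X_m]$, $d=r_1$, $\alpha(X_1)=\alpha_1(X_1)\bigl(X_2^{r_2}\cdots X_m^{r_m}\alpha_2(X_2)\cdots\alpha_m(X_m)\bigr)$ and $F(X_1,Z,T)=f(Z,T)+X_1h$, the defining relation of $A$ becomes $\alpha(X_1)X_1^{d}Y-F(X_1,Z,T)$, and the hypotheses of \thref{Admissible1} hold: $\alpha(0)\neq0$ because $\alpha_1(0)\neq0$ and each $\alpha_i\neq0$, $F(0,Z,T)=f\neq0$, and $\gcd_{R[Z,T]}(\alpha(0),f)=1$ since $\alpha(0)\in k[X_2,\dots,X_m]$ and $f\in k[Z,T]$ involve disjoint variables. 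Hence $w_1$ induces an admissible proper $\Z$-filtration on $A$ with
\[
\gr(A)\cong\frac{k[X_1,\dots,X_m,Y,Z,T]}{\bigl(\beta\,X_1^{r_1}Y-f(Z,T)\bigr)},\qquad \beta:=\alpha_1(0)X_2^{r_2}\cdots X_m^{r_m}\alpha_2(X_2)\cdots\alpha_m(X_m),
\]
the relation being $w_1$-homogeneous of degree $0$.

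Next I would transport the exponential map. By hypothesis some $A^{\phi}$, with $\phi$ a nontrivial exponential map on $A$, contains an element of positive $w_1$-value, so by \thref{dhm} the induced homogeneous exponential map $\overline{\phi}$ on $\gr(A)$ is nontrivial and $\rho(A^{\phi})\subseteq\gr(A)^{\overline{\phi}}$ contains a nonzero homogeneous element of positive degree. Among the generators only $x_1$ (degree $-1$) and $y$ (degree $r_1$) carry nonzero weight, so such an element genuinely involves $y$.

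The heart of the argument is the classification of homogeneous exponential maps on the generalized Asanuma ring $\gr(A)$. Using $r_i\ge2$ together with the factorial closedness of invariant rings (\thref{lemma : properties}(i)), I would first show $x_2,\dots,x_m\in\gr(A)^{\overline\phi}$; this makes $\beta$ a unit after inverting $x_2,\dots,x_m$, and by \thref{lemma : properties}(iii) the map $\overline\phi$ descends to the localization and, after rescaling $Y$ to absorb $\beta$, to a nontrivial homogeneous exponential map on the one-variable Asanuma ring
\[
B_K:=\frac{K[X_1,Y,Z,T]}{\bigl(X_1^{r_1}Y-f(Z,T)\bigr)},\qquad K:=k(X_2,\dots,X_m),
\]
whose invariant ring still meets the positive-degree part. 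At this point I would invoke the computation of the Makar--Limanov and Derksen invariants of Asanuma rings over a field (as in \cite{inv},\cite{adv}): a positive-degree invariant forces $f$, after a coordinate change of $K[Z,T]$, to be linear in one variable, $f=a_0(Z_1)+a_1(Z_1)T_1$ over $K$ (this is (i) over $K$); when $f$ is a line, $K[Z,T]/(f)=K^{[1]}$ forces $a_1\in K^{*}$, so $f$ is a coordinate of $K[Z,T]$ (this is (ii) over $K$). I expect this to be the main obstacle: both the rigidity statement $x_2,\dots,x_m\in\gr(A)^{\overline\phi}$ (where the multiplicities $r_i\ge2$ are essential) and the field-case classification rest on the delicate slice and homogeneity analysis underlying the invariant computations, the model case being that for a non-trivial line one has $\ml(B_K)=K[x_1]$ and every invariant ring lies in non-positive degree.

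Finally I would descend from $K$ to $k$. For (i), if $m=1$ then $K=k$ and there is nothing to do; if $k$ is infinite I would clear denominators in $Z_1,T_1,a_0,a_1$ and specialize the transcendentals $X_2,\dots,X_m$ to values of $k$ avoiding the proper (hence avoidable over an infinite field) degeneracy locus, thereby realizing the coordinate system and the linear form over $k$ itself. For (ii), the field-case step already gives that a line $f$ is a coordinate of $K[Z,T]$, i.e. $K[Z,T]=K[f]^{[1]}$, and I would conclude $k[Z,T]=k[f]^{[1]}$ by the descent of the coordinate property for lines along the faithfully flat extension $K/k$, which is valid for arbitrary $k$ and explains why (ii) requires neither $m=1$ nor infiniteness of $k$.
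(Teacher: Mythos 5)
First, a point of reference: this paper never proves \thref{lin2} at all --- it is quoted as a special case of \cite[Proposition 4.20]{genadv}, a preprint, so there is no internal proof to compare your attempt with, and your sketch has to stand on its own. Your opening reduction is sound and is surely how any proof must begin: \thref{Admissible1} applies with $R=k[X_2,\ldots,X_m]$ (the gcd hypothesis holds for the reason you give), and \thref{dhm} transports a positive $w_1$-value invariant to a homogeneous positive-degree invariant of the induced homogeneous exponential map $\overline{\phi}$ on $\gr(A)$ (using, as you implicitly do, that $w_1$ is a degree function, so the hypothesis on $\dk(A)$ forces a \emph{single} nontrivial exponential map with a positive-value invariant).

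The proposal collapses at its declared heart. The claim that $r_i\geqslant 2$ together with factorial closedness forces $x_2,\ldots,x_m\in\gr(A)^{\overline{\phi}}$ is not merely unproved --- it is false as stated. Take $m=2$, $\alpha_1=\alpha_2=1$, $h=0$, $f=Z$: then $\gr(A)=k[X_1,X_2,Y,Z,T]/(X_1^{r_1}X_2^{r_2}Y-Z)\cong k[x_1,x_2,y,t]$, and the homogeneous exponential map fixing $x_1,y,t$ and sending $x_2\mapsto x_2+U$ is nontrivial, has the positive-degree invariant $y$, and does not fix $x_2$. Since this example satisfies every hypothesis of \thref{lin2}, your rigidity statement cannot be a consequence of those hypotheses; it could at best be established under the extra assumption that $f$ is \emph{not} of the form in (i), i.e., inside a proof by contradiction --- and proving that dichotomy is precisely the technical core of \cite[Proposition 4.20]{genadv} and of the analogous theorems in \cite{inv}, \cite{adv}, \cite{adv2}, \cite{GG}. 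Deferring it to ``factorial closedness'' is the gap, not a routine step. Two further problems: the ``field case'' you invoke for (i) --- that a positive-degree invariant forces $f=a_0(Z_1)+a_1(Z_1)T_1$ for \emph{arbitrary} $f$ --- is not in \cite{inv} or \cite{adv}, which treat the case of $f$ a line and yield conclusions of type (ii); the refined classification is itself a theorem of the \cite{adv2}/\cite{GG}/\cite{genadv} circle, so you are in effect citing the statement under proof. And your descent of (ii) from $K=k(X_2,\ldots,X_m)$ to $k$ ``by faithfully flat descent, valid for arbitrary $k$'' is unsupported: flatness does not by itself transport the existence of a complementary coordinate $g$ with $K[f,g]=K[Z,T]$ down to $k[Z,T]$. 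Your specialization argument needs $k$ infinite --- which is exactly why hypothesis (i) carries that restriction --- whereas (ii) is asserted for every $k$, so the finite-field case would require a genuinely different argument (e.g., specializing into a finite extension and then descending the coordinate property along a separable algebraic extension), which you have not supplied.
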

	
	The next theorem follows from \cite[Theorem 4.22]{genadv}.
	
	\begin{thm}\thlabel{equiv}
		Let $A$ be an affine domain defined as follows
		$$
		A= \dfrac{k[X_1,\ldots,X_m,Y,Z,T]}{(a_1(X_1)\cdots a_m(X_m)Y-f(Z,T))},
		$$
		such that each $a_i(X_i)$ has at least one separable multiple root in $\overline{k}$. Then the following are equivalent.
		\begin{enumerate}
			\item [\rm (i)] $A=k^{[m+2]}$.
			
			\item[\rm (ii)] $k[Z,T]=k[f]^{[1]}$.
		\end{enumerate}
		
	\end{thm}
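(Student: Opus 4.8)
The plan is to prove the two implications separately. The implication $(ii)\Rightarrow(i)$ is elementary and does not use the hypothesis on the roots, whereas $(i)\Rightarrow(ii)$ is the substantial direction in which the separable-multiple-root condition is essential.

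For $(ii)\Rightarrow(i)$, I would assume $k[Z,T]=k[f]^{[1]}$ and pick $g\in k[Z,T]$ with $k[Z,T]=k[f,g]$. Setting $v:=g(z,t)\in A$, the element $f(z,t)$ equals $a_1(x_1)\cdots a_m(x_m)\,y$ by the defining relation, so both $z$ and $t$ lie in $k[a_1(x_1)\cdots a_m(x_m)y,\,v]$. Hence $A$ is generated over $k$ by $x_1,\dots,x_m,y,v$, i.e. the natural map $k[X_1,\dots,X_m,Y,V]\to A$ with $X_i\mapsto x_i,\ Y\mapsto y,\ V\mapsto v$ is surjective. As both rings are affine domains of dimension $m+2$, the kernel of this surjection is a prime of height zero, hence zero, and the map is an isomorphism, giving $A=k^{[m+2]}$.

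For $(i)\Rightarrow(ii)$ the strategy is to bring $A$ into the form handled by \thref{lin2}. Choose a separable multiple root $\lambda_i\in\overline{k}$ of each $a_i$ and set $k':=k(\lambda_1,\dots,\lambda_m)$, a finite \emph{separable} extension of $k$. Base change gives $A':=A\otimes_k k'={k'}^{[m+2]}$, and the translations $X_i\mapsto X_i+\lambda_i$ convert $a_i(X_i)$ into $X_i^{r_i}\alpha_i(X_i)$ with $r_i\geqslant2$ and $\alpha_i(0)\neq0$. Thus $A'$ is precisely of the shape in \thref{lin2} with $h=0$; moreover $A'={k'}^{[m+2]}$ forces $\dk(A')=A'$, so the hypothesis of \thref{lin2} on the $w_1$-values is met.

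It then remains to read off $(ii)$ from \thref{lin2} and to descend from $k'$ to $k$. The crux is to establish that $f$ is a line over $k'$: once this is known, \thref{lin2}(ii) immediately yields $k'[Z,T]=k'[f]^{[1]}$, and the property of $f$ being a coordinate descends along the finite \emph{separable} extension $k'/k$ to give $k[Z,T]=k[f]^{[1]}$. To see that $f$ is a line I would, when $k'$ is infinite, invoke \thref{lin2}(i) to write $f=a_0(Z_1)+a_1(Z_1)T_1$ in a suitable coordinate system and then analyse the associated graded ring of $A'$ (via \thref{Admissible1} and the homogenization \thref{dhm}) to exclude any degenerate factorization of $f$; the perfect-field case is easier since every line is then automatically trivial. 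I expect the two genuine obstacles to be exactly these: proving that $f$ is a line, so that \thref{lin2}(ii) becomes applicable, and carrying out the descent, where the separability of the $\lambda_i$ is indispensable because descent along the inseparable extension $\overline{k}/k$ over an imperfect base field — the situation in which nontrivial lines actually occur — is unavailable.
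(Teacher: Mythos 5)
First, a structural point: the paper does not actually prove this theorem --- it is imported wholesale from \cite[Theorem 4.22]{genadv} (``The next theorem follows from\dots''), so there is no in-paper proof to compare against, and your proposal has to stand on its own. Its first half does: for (ii)$\Rightarrow$(i), writing $k[Z,T]=k[f,g]$, noting that $z,t\in k[a_1(x_1)\cdots a_m(x_m)y,\,g(z,t)]$, and killing the kernel of the resulting surjection $k^{[m+2]}\twoheadrightarrow A$ by the dimension count is complete and correct. Your skeleton for (i)$\Rightarrow$(ii) --- base change to $k'=k(\lambda_1,\dots,\lambda_m)$, which is separable over $k$ by hypothesis, translate each $a_i$ into $X_i^{r_i}\alpha_i(X_i)$ with $r_i\geqslant 2$ and $\alpha_i(0)\neq 0$, observe $\dk(A')=A'$, apply \thref{lin2}, then descend --- is also the right architecture, and you correctly identify that the separability of the $\lambda_i$ is there precisely to make the descent possible.

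However, the two steps you defer are not peripheral verifications; they are the entire content of the implication, and your sketches for them do not survive scrutiny. (a) The crux is showing that $f$ is a line in $k'[Z,T]$, so that \thref{lin2}(ii) applies. \thref{lin2}(i) only puts $f$ in the form $a_0(Z_1)+a_1(Z_1)T_1$, a form shared by plenty of non-lines ($Z_1T_1$, or any $a_0(Z_1)$ of degree $\geqslant 2$), and ``analyse the associated graded ring to exclude degenerate factorizations'' is a gesture, not an argument: quotients and fibers of a polynomial ring by a prime element can be arbitrarily bad, so nothing formal forces $k'[Z,T]/(f)$ to be $k'^{[1]}$. Deducing from $A'={k'}^{[m+2]}$ that $f$ is a coordinate is exactly the main theorem of \cite{adv2} (for $\alpha_i=1$) and of \cite[Theorem 4.22]{genadv} in general; as written, you have reduced the theorem to itself. (b) The descent of ``$f$ is a coordinate'' along $k'/k$ is true but not free: one needs that $k[Z,T]/(f)$ is a separable form of ${\mathbb A}^1$, hence trivial, and then a Hilbert-90-type computation for the automorphism group ${k'}^*\ltimes k'[f]$ of $k'[f]^{[1]}$ over $k'[f]$; this genuinely uses that $k'\otimes_k k'$ is reduced. (c) Most seriously, your fallback ``the perfect-field case is easier since every line is then automatically trivial'' is false: non-trivial lines exist over \emph{every} field of characteristic $p$, perfect or not --- the examples of Segre \cite{Se} and Nagata \cite{Na} are defined over $\mathbb{F}_p$, and the standard line $Z^{p^e}+T+T^{sp}$ (with $p^e\nmid sp$, $sp\nmid p^e$) is non-trivial even over $\overline{\mathbb{F}_p}$. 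Indeed the whole point of \thref{zcp2} is that counterexamples to the ZCP exist over such perfect fields. So the branch of your argument covering finite (hence perfect) $k'$ collapses, and your closing claim that non-trivial lines ``actually occur'' only over imperfect base fields is a misconception, not a simplification.
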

	
	\section{Main Theorems}\label{sectioniso}
	We begin with the following lemma. 
	\begin{lem}\thlabel{lem0}
		Let $A$ be the affine domain as in \eqref{A} i.e.,
		\begin{equation*}
			A=\dfrac{k[X_1,\ldots,X_m,Y,Z,T]}{(\alpha(X_1,\ldots,X_m)Y-F(X_1,\ldots,X_m,Z,T))}.
		\end{equation*} 
		Then the following statements hold: 
		\begin{itemize}
			\item [\rm (i)]  $k[x_1,\dots,x_m,z,t] \subseteq \dk(A)$.
			
			\item[\rm(ii)] Suppose for every prime divisor $p$ of $\alpha$ in $k[X_1,\ldots,X_m]$, $p \notin A^{*}$ and $F\notin k[X_1,\dots,X_m]$. Then $\ml(A) \subseteq k[x_1,\ldots,x_m]$. 
		\end{itemize}
	\end{lem}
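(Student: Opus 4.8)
The plan is to realize both parts through two explicit exponential maps coming from the defining relation $\alpha y = F$. Note first that $F(x_1,\dots,x_m,z,t+\alpha U)-F(x_1,\dots,x_m,z,t)$ is divisible by $\alpha U$ in $A[U]$, so $G:=\big(F(x_1,\dots,x_m,z,t+\alpha U)-F(x_1,\dots,x_m,z,t)\big)/\alpha$ lies in $A[U]$. Define $\phi\colon A\to A[U]$ by $\phi(x_i)=x_i$, $\phi(z)=z$, $\phi(t)=t+\alpha U$, $\phi(y)=y+G$. I would first check that $\phi$ respects the relation, since $\alpha\phi(y)-\phi(F)=\alpha y+\alpha G-F(x_1,\dots,x_m,z,t+\alpha U)=\alpha y-F=0$, so $\phi$ is a well-defined $k$-algebra homomorphism. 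The axiom $\varepsilon_0\phi=\mathrm{id}$ holds because $G$ is a multiple of $U$, and the cocycle identity $\phi_V\phi_U=\phi_{U+V}$ reduces, after multiplying by $\alpha$, to the telescoping identity $\big(F(\dots,t+\alpha V)-F(\dots,t)\big)+\big(F(\dots,t+\alpha V+\alpha U)-F(\dots,t+\alpha V)\big)=F(\dots,t+\alpha(U+V))-F(\dots,t)$, which one then cancels $\alpha$ from in the domain $A[U,V]$. Since $\alpha\neq 0$, $\phi$ is nontrivial and fixes $x_1,\dots,x_m,z$, so $k[x_1,\dots,x_m,z]\subseteq A^{\phi}\subseteq\dk(A)$. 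The symmetric map $\phi'$ with $\phi'(z)=z+\alpha U$ and $\phi'(t)=t$ fixes $x_1,\dots,x_m,t$, giving $k[x_1,\dots,x_m,t]\subseteq A^{\phi'}\subseteq\dk(A)$. Together these prove (i).

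For (ii) I would reuse $\phi,\phi'$. Since $\alpha\in k[x_1,\dots,x_m]\subseteq A^{\phi}\cap A^{\phi'}$, \thref{lemma : properties}(iii) permits localizing at $\alpha$: in $A_{\alpha}$ we have $y=F/\alpha$, so $A_{\alpha}\cong k[x_1,\dots,x_m]_{\alpha}[z,t]$ is a polynomial ring on which $\phi$ and $\phi'$ become the translations $t\mapsto t+\alpha U$ and $z\mapsto z+\alpha U$, with invariant rings $k[x_1,\dots,x_m]_{\alpha}[z]$ and $k[x_1,\dots,x_m]_{\alpha}[t]$ respectively. Hence $A^{\phi}\subseteq k[x_1,\dots,x_m]_{\alpha}[z]$ and $A^{\phi'}\subseteq k[x_1,\dots,x_m]_{\alpha}[t]$, so that
$$\ml(A)\subseteq A^{\phi}\cap A^{\phi'}\subseteq k[x_1,\dots,x_m]_{\alpha}[z]\cap k[x_1,\dots,x_m]_{\alpha}[t]=k[x_1,\dots,x_m]_{\alpha},$$
and therefore $\ml(A)\subseteq k[x_1,\dots,x_m]_{\alpha}\cap A$ inside $A_{\alpha}$.

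The remaining and main obstacle is to descend from $k[x_1,\dots,x_m]_{\alpha}\cap A$ to $k[x_1,\dots,x_m]$, i.e.\ to show no element of $\ml(A)$ has a genuine $\alpha$-denominator. Writing $c=r/\alpha^{n}$ with $r\in k[x_1,\dots,x_m]$, I would control divisibility by each prime divisor $p$ of $\alpha$ through $A/pA\cong (k[x_1,\dots,x_m]/p)[z,t,y]/(\overline F)$, where $\overline F=F\bmod p$ (using $p\mid\alpha$). When $\overline F=0$ or $\deg_{z,t}\overline F\geqslant 1$ the map $k[x_1,\dots,x_m]/p\hookrightarrow A/pA$ is injective, so $p\nmid_{A}r$ whenever $p\nmid r$ in $k[x_1,\dots,x_m]$, which forces $c\in k[x_1,\dots,x_m]$; here the standing hypothesis $F\notin k[x_1,\dots,x_m]$ is what makes $\overline F$ generically non-constant. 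The delicate case, which I expect to be the crux, is when $\overline F$ is a nonzero non-unit constant in $z,t$: the assumption that no prime divisor $p$ of $\alpha$ is a unit in $A$ prevents $A/pA$ from collapsing to zero, yet injectivity can still fail, and one then uses the resulting identity $F=\widetilde{c_0}+pF_1$ to exhibit a coordinate change producing extra exponential maps that shrink $\ml(A)$ still further, keeping it inside $k[x_1,\dots,x_m]$. If convenient, \thref{lemma : properties}(iv) lets me base change to $\overline k$ to factor $\alpha$ into primes before running this analysis.
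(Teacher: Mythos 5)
Your part (i) is correct and is precisely the paper's argument: your $\phi$ and $\phi'$ are the paper's $\phi_2$ and $\phi_1$, and the verifications you add (well-definedness against the relation, the two exponential axioms, non-triviality) are routine details the paper omits. Your opening reduction in (ii) is also the paper's: localizing at $\alpha\in A^{\phi}\cap A^{\phi'}$ identifies $A_\alpha$ with $k[x_1,\ldots,x_m]_{\alpha}[z,t]$, the invariant rings land in $k[x_1,\ldots,x_m,\alpha^{-1},z]$ and $k[x_1,\ldots,x_m,\alpha^{-1},t]$ (these are \eqref{phi1} and \eqref{phi2}), and hence $\ml(A)\subseteq k[x_1,\ldots,x_m,\alpha^{-1}]\cap A$.

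The genuine gap is the one you flag yourself but do not close: the case where $F\bmod p$ is a nonzero nonunit constant of $k[X_1,\ldots,X_m]/(p)$ for some prime $p\mid\alpha$. The phrase ``exhibit a coordinate change producing extra exponential maps'' is a hope, not an argument, and the hypotheses of (ii) do not exclude this case, so your proposal is an incomplete proof of the lemma as stated. The case is genuinely nonempty: take $m=2$, $\alpha=X_1^2$, $F=X_2+X_1Z$, so that
$$A=\dfrac{k[X_1,X_2,Y,Z,T]}{(X_1^2Y-X_2-X_1Z)}\cong k[x_1,y,z,t]=k^{[4]},$$
where $x_1\notin A^*$ and $F\notin k[X_1,X_2]$. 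Here $u:=x_1y-z=x_2/x_1$ is fixed by both exponential maps, lies in $k[x_1,x_2,\alpha^{-1}]\cap A$, but does not lie in $k[x_1,x_2]$ (writing $x_2=x_1u$ in the coordinate system $(x_1,u,y,t)$, no polynomial in $x_1$ and $x_1u$ equals $u$). So the two maps $\phi,\phi'$ alone cannot finish the proof in this case; one really does need further exponential maps, and the conclusion survives here only because $A=k^{[4]}$ has plenty of them ($\ml(A)=k$). You should know, however, that this gap is not yours alone: the paper's entire proof of (ii) is the one-line assertion $\ml(A)\subseteq A^{\phi_1}\cap A^{\phi_2}\cap A=k[x_1,\ldots,x_m]$, and the example above shows that this asserted equality is \emph{false} under the stated hypotheses. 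By contrast, your mod-$p$ descent through $A/pA\cong\bigl(k[X_1,\ldots,X_m]/(p)\bigr)[Y,Z,T]/(\overline F)$ correctly and completely proves $k[x_1,\ldots,x_m,\alpha^{-1}]\cap A=k[x_1,\ldots,x_m]$ whenever $\overline F$ is zero or of positive degree in $Z,T$ for every prime $p\mid\alpha$; that stronger hypothesis is exactly what holds in all of the paper's later applications, since for rings of type \eqref{A2} one has $F\equiv f(Z,T)\pmod{p_j}$ with $f\notin k$. So at the one point where both proofs are incomplete, your write-up is the more rigorous; but to establish (ii) in the stated generality, the residual case would still have to be settled (or the hypothesis strengthened as above).
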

	\begin{proof}
		(i) At first we define two exponential maps  $\phi_{1}$, $\phi_{2}$ on $A$.
		
		\noindent
		$\phi_{1}: A \rightarrow A[U]$ is defined as follows:
		$$\begin{array}{lll}
			\phi_{1}(x_i) &= &x_i,\,1\leqslant i\leqslant m\\
			\phi_{1}(z)&= &z+ \alpha(x_1,\dots,x_m)U\\
			\phi_{1}(t)&=&t\\
			\phi_{1}(y)&=&\frac{F(x_1,\dots,x_m, z+\alpha U, t)}{\alpha} = y+ Uv(x_1,\dots,x_m,z,t,U), \text{~for some~}v\in k[x_1,\dots,x_m,z,t,U]
		\end{array}$$
		%
		\noindent
		Next $\phi_{2}: A \rightarrow A[U]$ is  defined below:
		$$\begin{array}{lll}
			\phi_2(x_i) &= &x_i,\,1\leqslant i\leqslant m\\
			\phi_{2}(z)&= &z\\
			\phi_{2}(t)&=&t+ \alpha(x_1,\dots,x_m)U\\
			\phi_{2}(y)&=&\frac{F(x_1,\dots,x_m, z, t+ \alpha U)}{\alpha} = y+ Uw(x_1,\dots,x_m,z,t,U), \text{~for some~}w\in k[x_1,\dots,x_m,z,t,U]
		\end{array}$$
		Clearly 
		\begin{equation}\label{phi1}
			k[x_1,\dots,x_m,t] \subseteq A^{\phi_{1}} \subseteq k[x_1,\dots,x_m,\alpha^{-1},t] 
		\end{equation}
		and
		\begin{equation}\label{phi2}
			k[x_1,\dots,x_m,z] \subseteq A^{\phi_{2}}\subseteq k[x_1,\dots,x_m,\alpha^{-1},z] 
		\end{equation}
		Therefore, $k[x_1,\dots,x_m,z,t] \subseteq \dk(A)$.
		
		\smallskip
		\noindent
		(ii)
		We now assume that $p \notin A^*$ for every prime divisor $p$ of $\alpha$ and $F\notin k[X_1,\dots,X_m]$. Then it follows that 
		$\ml(A) \subseteq A^{\phi_{1}} \cap A^{\phi_{2}} \cap A= k[x_1,\dots,x_m]$.
	\end{proof}
	
	The following result describes the isomorphisms between two affine domains of the form $A$ as in \eqref{A} upto certain condition on their Makar-Limanov and Derksen invariants. 
	
	\begin{thm}\thlabel{iso}
		Let 
		$$
		A_1:=\frac{k[X_1,\ldots,X_m,Y,Z,T]}{(\alpha_1(X_1,\ldots,X_m)Y-F_1(X_1,\ldots,X_m,Z,T))}
		$$
		and 
		$$
		A_2:= \frac{k[X_1,\ldots,X_m,Y,Z,T]}{(\alpha_2(X_1,\ldots,X_m)Y-F_2(X_1,\ldots,X_m,Z,T))}.
		$$
		Let $x_1,\ldots,x_m,y,z,t$ and $x_1^{\prime},\ldots,x_m^{\prime},y^{\prime},z^{\prime},t^{\prime}$ denote the images of $X_1,\ldots,X_m,Y,Z,T$ in $A_1$ and $A_2$ respectively.
		Suppose that $B_1:=\dk(A_1)=k[x_1,\ldots,x_m,z,t]$, $E_1:=\ml(A_1)=k[x_1,\ldots,x_m]$, $B_2:=\dk(A_2)=k[x_1^{\prime},\ldots,x_m^{\prime},z^{\prime},t^{\prime}]$ and $E_2:=\ml(A_2)=k[x_1^{\prime},\ldots,x_m^{\prime}]$.
		Let $\alpha_1=\prod_{i=1}^{n} p_i^{r_i}$ and $\alpha_2= \prod_{j=1}^{n^{\prime}} q_j^{s_j}$ be the prime factorisations of $\alpha_1$ and $\alpha_2$ in $k[X_1,\ldots,X_m]$ respectively. 
		Suppose $\phi: A_1 \rightarrow A_2$ is an isomorphism, then the following hold:
		\begin{itemize}
			\item[\rm (i)] $\phi$ restricts to an isomorphism from $B_1$ to $B_2$ and from $E_1$ to $E_2$.
			
			\item [\rm (ii)] 
			For every $j$, $1 \leqslant j \leqslant n^{\prime}$, there exists $i$, $1 \leqslant i \leqslant n$ such that $\phi(p_i) =\lambda_{ij} q_j$ for some $\lambda_{ij} \in k^*$ and $n=n^{\prime}$.

			\item[\rm (iii)]   
			If $\phi(p_i)=\lambda_{ij} q_j$, then $r_i=s_j$.
			
			\item[\rm (iv)] $\phi(\alpha_1)=\gamma \alpha_2$ for some $\gamma \in k^*$.
			
			\item[\rm (v)] $\phi((\alpha_1, F_1)B_1)=(\alpha_2,F_2)B_2$.
			
		\end{itemize} 
	\end{thm}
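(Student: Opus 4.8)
The plan is to first establish (i) by the standard functoriality of exponential maps, and then to derive (ii)--(v) from one intrinsic description of the ideal $\alpha_i A_i$, obtained via relative differentials.

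For (i): an isomorphism $\phi:A_1\to A_2$ sends each exponential map $\psi$ on $A_1$ to the exponential map $\psi':=\phi[U]\circ\psi\circ\phi^{-1}$ on $A_2$, where $\phi[U]:A_1[U]\to A_2[U]$ extends $\phi$ by $U\mapsto U$; this gives a bijection $\mathrm{EXP}(A_1)\to\mathrm{EXP}(A_2)$ with $\phi(A_1^{\psi})=A_2^{\psi'}$. Hence $\phi(\dk(A_1))=\dk(A_2)$ and $\phi(\ml(A_1))=\ml(A_2)$, i.e. $\phi(B_1)=B_2$ and $\phi(E_1)=E_2$; in particular $\phi$ is an isomorphism of the extensions $B_i\subseteq A_i$ and restricts to an isomorphism $E_1\to E_2$ of polynomial rings.

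The crux is (iv). Writing $A_i=B_i[Y]/(\alpha_i Y-F_i)$ (legitimate, since $x_1,\dots,x_m,z,t$ are algebraically independent, so $B_i$ is a polynomial ring), the module of relative Kähler differentials is $\Omega_{A_i/B_i}=A_i\,dY/(\alpha_i\,dY)\cong A_i/\alpha_i A_i$, because $\alpha_i,F_i\in B_i$ forces $d(\alpha_i Y-F_i)=\alpha_i\,dY$. As $\alpha_i\neq 0$ is a nonzerodivisor, $\mathrm{Ann}_{A_i}\Omega_{A_i/B_i}=\alpha_i A_i$, so $\alpha_i A_i$ is an intrinsic invariant of the pair $B_i\subseteq A_i$. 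Since $\phi$ is an isomorphism of pairs, it induces a $\phi$-semilinear isomorphism $\Omega_{A_1/B_1}\cong\Omega_{A_2/B_2}$, whence $\phi(\alpha_1 A_1)=\alpha_2 A_2$ and therefore $\phi(\alpha_1)=u\,\alpha_2$ for some $u\in A_2^{*}$. Now $\phi(\alpha_1),\alpha_2\in E_2=\ml(A_2)$, and $E_2$ is factorially closed in $A_2$ (an intersection of the factorially closed rings $A_2^{\psi}$, cf. \thref{lemma : properties}); from $u\cdot\alpha_2=\phi(\alpha_1)\in E_2$ we get $u\in E_2$, hence $u\in E_2^{*}=k^{*}$, giving $\phi(\alpha_1)=\gamma\alpha_2$ with $\gamma\in k^{*}$. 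I expect this intrinsic pinning of $\alpha_i$ up to a scalar --- rather than the multiplicity bookkeeping --- to be the main obstacle, and the differential computation is what resolves it.

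The remaining parts are then formal. For (ii) and (iii), $\phi$ restricts to an isomorphism $E_1=k[x_1,\dots,x_m]\to E_2=k[x_1',\dots,x_m']$ of UFDs, so $\phi(\alpha_1)=\prod_i\phi(p_i)^{r_i}$ is a prime factorization; comparing it with $\gamma\alpha_2=\gamma\prod_j q_j^{s_j}$ and using uniqueness of factorization yields a bijection $p_i\leftrightarrow q_j$ with $\phi(p_i)=\lambda_{ij}q_j$, $n=n'$, and $r_i=s_j$ on matched pairs. For (v), since each $A_i$ is a domain the degree-one polynomial $\alpha_i Y-F_i$ must be primitive over $B_i$, i.e. $\gcd_{B_i}(\alpha_i,F_i)=1$; using normality of $B_i$ one checks $\alpha_i A_i\cap B_i=(\alpha_i,F_i)B_i$ (if $\alpha_i a\in B_i$, write $\alpha_i a=\alpha_i c_0+F_i a_1$ with $a_1\in A_i$, and coprimality forces $v_{\mathfrak p}(a_1)\ge 0$ at every height-one prime $\mathfrak p$, so $a_1\in B_i$). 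Combining this with (iv),
$$\phi\big((\alpha_1,F_1)B_1\big)=\phi(\alpha_1 A_1\cap B_1)=\phi(\alpha_1)A_2\cap B_2=\alpha_2 A_2\cap B_2=(\alpha_2,F_2)B_2,$$
which is (v).
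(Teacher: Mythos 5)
Your proposal is correct, but it reaches the conclusion by a genuinely different route than the paper. The paper's order of deduction is (ii) $\Rightarrow$ (iii) $\Rightarrow$ (iv): after using $\phi$ to identify $A_1$ with $A_2$, it exploits the two inclusions $B \hookrightarrow A \hookrightarrow B[1/p_1,\ldots,1/p_n]$ and $B \hookrightarrow A \hookrightarrow B[1/q_1,\ldots,1/q_{n'}]$, matches the primes by clearing denominators of $y'$ (and symmetrically of $y$), and then pins down the multiplicities by comparing the contracted ideals $q_j^{s_j}A\cap B$ and $p_i^{r_i}A\cap B$ and deriving contradictions with $q_j \nmid F_2$; only then does (iv) follow by multiplying the matched prime powers. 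You invert this: you first prove (iv) intrinsically, identifying $\alpha_iA_i$ as $\mathrm{Ann}_{A_i}\Omega_{A_i/B_i}$ via the presentation $A_i=B_i[Y]/(\alpha_iY-F_i)$ (valid since $x_1,\ldots,x_m,z,t$ are algebraically independent, as $\operatorname{Frac}(A_i)=k(x_1,\ldots,x_m,z,t)$), and then use factorial closedness of $\ml(A_2)$ to force the unit into $k^*$; after that, (ii) and (iii) become purely formal consequences of unique factorization in the polynomial ring $E_2$, with no localization bookkeeping at all. Both arguments are sound; the paper's is more elementary (no differentials) and produces the prime-by-prime correspondence in exactly the form reused later in \thref{iso2}, while yours isolates a single conceptual invariant that makes (ii)--(iv) automatic and, as a bonus, your proof of the contraction identity $\alpha_iA_i\cap B_i=(\alpha_i,F_i)B_i$ (via coprimality of $\alpha_i,F_i$ and normality of $B_i$) supplies a detail that the paper's proofs of (iii) and (v) use but do not spell out.
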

	\begin{proof}
		(i)  Since $B_i= \dk(A_i)$ and $E_i=\ml(A_i)$ for $i=1,2$, the assertion follows.
		
		\smallskip
		\noindent
		(ii) Since $\phi:A_1 \rightarrow A_2$ is an isomorphism and (i) holds, identifying $\phi(A_1)$ to $A_1$ we can assume that $A_1=A_2=A$, $B_1=B_2=B$ and $E_1=E_2=E$. Now note the following
		\begin{equation}\label{a}
			B \hookrightarrow A \hookrightarrow B\left[\frac{1}{p_1},\ldots, \frac{1}{p_n}\right]
		\end{equation}
		and
		\begin{equation}\label{b}
			B \hookrightarrow A \hookrightarrow B\left[\frac{1}{q_1},\ldots, \frac{1}{q_{n^{\prime}}}\right].
		\end{equation}
		Since $y^{\prime}=\dfrac{F_2(x_1^{\prime},\ldots,x_m^{\prime},z^{\prime},t^{\prime})}{q_1^{s_1}\cdots q_{n^{\prime}}^{s_{n^{\prime}}}} \in A \setminus B$ and \eqref{a} holds, there exists $l >0$ such that 
		$$
		(p_1 \cdots p_n)^l y^{\prime}= \dfrac{(p_1 \cdots p_n)^l F_2(x_1^{\prime},\ldots,x_m^{\prime},z^{\prime},t^{\prime})}{q_1^{s_1}\cdots q_{n^{\prime}}^{s_{n^{\prime}}}} \in B.
		$$
		Since $A$ is an integral domain, for every $j$, $q_j \nmid F_2(x_1^{\prime},\ldots,x_m^{\prime},z^{\prime},t^{\prime})$ in $B$, and hence there exists $i$, $1 \leqslant i \leqslant n$ and $\lambda_{ij} \in k^*$ such that 
		\begin{equation}\label{2}
			p_i =\lambda_{ij} q_j.
		\end{equation}
		Thus $n^{\prime} \leqslant n$.
		Further using the fact that $y \in A \setminus B$, by \eqref{b} we have $n \leqslant n^{\prime}$.
		Therefore the assertion follows.
		
		\smallskip
		\noindent
		(iii) By the given condition we can assume that \eqref{2} holds. We now show that $r_i=s_j$. If possible suppose $r_i<s_j$. 
		Consider the ideal $I= q_j^{s_j}A \cap B=(q_j^{s_j}, F_2(x_1^{\prime},\ldots,x_m^{\prime},z^{\prime},t^{\prime}))B$. Further using \eqref{2}, we have $I=p_i^{s_j}A \cap B=(p_i^{s_j}, p_i^{s_j-r_i} F_1(x_1,\ldots,x_m,z,t))B \subseteq p_iB=q_jB$. But it contradicts the fact that $q_j \nmid F_2(x_1^{\prime},\ldots,x_m^{\prime},z^{\prime},t^{\prime})$ in $B$. Thus $r_i \geqslant s_j$.  
		
		Next if $r_i>s_j$, considering the ideal $J=p_i^{r_i} A \cap B$ and by similar arguments as above, we get that  $F_1(x_1,\ldots,x_m,z,t) \in p_iB$, which is a contradiction.  
		Thus $r_i=s_j$.
		
		\smallskip
		\noindent
		(iv) By \eqref{2} and (iii) 
		we have $p_i^{r_i}= \mu_{ij} q_j^{s_j}$ for some $\mu_{ij} \in k^*$ and $r_i=s_j$. 
		Now using the factorisations of $\alpha_1(x_1,\ldots,x_m)$ and $\alpha_2(x_1^{\prime},\ldots,x_m^{\prime})$ in $B$ we have 
		\begin{equation}\label{3}
			\alpha_1(x_1,\ldots,x_m)= \gamma \alpha_2(x_1^{\prime},\ldots,x_m^{\prime}),
		\end{equation}
		for some $\gamma \in k^*$.
		Thus the assertion follows.
		
		\smallskip
		\noindent
		(v) By \eqref{3}, we have $\alpha_1(x_1,\ldots,x_m)A \cap B=\alpha_2(x_1^{\prime},\ldots,x_m^{\prime})A \cap B$. Thus
		\begin{equation}
			(\alpha_1(x_1,\ldots,x_m), F_1(x_1,\ldots,x_m,z,t))B=(\alpha_2(x_1^{\prime},\ldots,x_m^{\prime}), F_2(x_1^{\prime},\ldots,x_m^{\prime},z^{\prime},t^{\prime}))B
		\end{equation}
		and hence the result follows.
	\end{proof}
	From now on wards for the rest of this section, we assume that $A$ be as in \eqref{A}, with $\alpha(X_1,\ldots,X_m)=a_1(X_1) \cdots a_m(X_m)$ and $F=f(Z,T)+(\prod_{j=1}^{n} p_j )h(X_1,\ldots,X_m,Z,T)$ where $p_j$'s are all prime factors of $\alpha(X_1,\ldots,X_m)$ in $k[\X]$. That means
	\begin{equation}\label{A2}
		A=\dfrac{k[X_1,\ldots,X_m,Y,Z,T]}{\left(a_1(X_1) \cdots a_m(X_m)Y- f(Z,T)-(\prod_{j=1}^{n} p_j )h(X_1,\ldots,X_m,Z,T)\right)}.
	\end{equation}
	
	\begin{rem}\thlabel{r5}
		\rm (i) Let $\lambda_i$ be a root of $a_i(X_i)$ in $\overline{k}$ with multiplicity $r_i\geqslant 1$, for some $i,\mi$. Now note that 
		$$
		A \hookrightarrow \overline{A}:=A \otimes_k \overline{k}:=\dfrac{\overline{k}[X_1,\ldots,X_m,Y,Z,T]}{((X_i-\lambda_i)^{r_i} \alpha^{\prime}(X_1,\ldots,X_m)Y-F(X_1,\ldots,X_m,Z,T))},
		$$
		where $\alpha^{\prime}(X_1,\ldots,\lambda_i,\ldots,X_m) \neq 0$. 
		Consider the degree function $\omega_{\lambda_i}$ on $\overline{A}$ given by, 
		$$\omega_{\lambda_i}(x_i-\lambda_i)=-1,\,
		\omega_{\lambda_i}(y)=r_i,\, \omega_{\lambda_i}(x_j)=0 \text{ for } 1\leqslant j \leqslant m,\, j \neq i,\, \omega_{\lambda_i}(z)=0,\, \omega_{\lambda_i}(t)=0.$$ By \thref{Admissible1}, $\omega_{\lambda_i}$ induces an admissible $\bZ$-filtration on $\overline{A}$ with respect to the generating set $\{x_1,\ldots, x_i-\lambda_i, \ldots, x_m, y, z, t\}$. 
		
		\smallskip
		\noindent
		(ii) 
		Since the filtration induced by $\omega_{\lambda_i}$ is admissible with respect to $\{x_1,\ldots, x_i-\lambda_i, \ldots,x_m,y,z,t\}$, if $b\in A$ be such that $\omega_{\lambda_i}(b) \leqslant 0$, then $b \in \overline{k}[x_1,\ldots,x_{m}, (x_i-\lambda_i)^{r_i}y, z, t]$.

		\smallskip
		\noindent
		(iii)
		Let $b \in A$ be such that $\omega_{\lambda_i}(b)>0$, then the highest degree homogeneous summand of $b$ with respect to $\omega_{\lambda_i}$ is divisible by $y$ in $\overline{A}$.  
	\end{rem}

	The following result describes $\ml(A)$ when $\dk(A)=k[x_1,\ldots,x_m,z,t]$.
	\begin{prop}\thlabel{ml}
		Let $\phi$ be a non-trivial exponential map on $A$ such that $A^{\phi} \subseteq k[x_1,\ldots,x_m,z,t]$. Then $k[x_1,\ldots,x_m] \subseteq A^{\phi}$. In particular,  
		if $\dk(A)=k[x_1,\ldots,x_m,z,t]$, then $\ml(A)=k[x_1,\ldots,x_m]$.
	\end{prop}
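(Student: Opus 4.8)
The plan is to establish the inclusion $k[x_1,\dots,x_m]\subseteq A^{\phi}$ first and then deduce the ``in particular'' clause. If $\dk(A)=k[x_1,\dots,x_m,z,t]$, then every non-trivial exponential map $\phi$ on $A$ satisfies $A^{\phi}\subseteq\dk(A)=k[x_1,\dots,x_m,z,t]$, so the main inclusion gives $k[x_1,\dots,x_m]\subseteq A^{\phi}$ for every $\phi$, whence $k[x_1,\dots,x_m]\subseteq\ml(A)$; the reverse inclusion $\ml(A)\subseteq k[x_1,\dots,x_m]$ is \thref{lem0}(ii) (its hypotheses hold under the standing assumptions on $A$). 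Thus the whole content is the first assertion. To prove it, it suffices to show $x_i\in A^{\phi}$ for each $i$. Put $C:=A^{\phi}$ and $B:=k[x_1,\dots,x_m,z,t]$, so $C\subseteq B$; by \thref{lemma : properties}(i),(ii), $C$ is factorially closed in $A$ and $\td_k C=\td_k A-1=m+1$. By \thref{lemma : properties}(iv) I may base change to $\overline{k}$: with $\overline{A}=A\otimes_k\overline{k}$ and $\overline\phi=\phi\otimes\mathrm{id}$ one has $\overline{A}^{\overline\phi}=C\otimes_k\overline{k}\subseteq\overline{B}:=\overline{k}[x_1,\dots,x_m,z,t]$, and $x_i\in\overline{A}^{\overline\phi}$ implies $x_i\in A^{\phi}$. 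Since $\overline{A}^{\overline\phi}$ is again factorially closed in $\overline{A}$ and each $a_i$ now splits, it is enough to produce, for each $i$, a root $\lambda_i$ of $a_i$ with $x_i-\lambda_i\in\overline{A}^{\overline\phi}$, for then $x_i=(x_i-\lambda_i)+\lambda_i\in\overline{A}^{\overline\phi}$.

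Fix such an $i$ and a root $\lambda_i$ of multiplicity $r_i$, and use the degree function $\omega_{\lambda_i}$ from \thref{r5}. By \thref{r5}(i) and \thref{Admissible1} the associated graded ring is the explicit Asanuma-type domain $G:=\gr(\overline{A})\cong \overline{k}[X_1,\dots,X_m,Y,Z,T]/(\beta\,(X_i-\lambda_i)^{r_i}Y-f(Z,T))$, where $\beta\in\overline{k}[x_j:j\neq i]\setminus\{0\}$ (here $F$ specialises to $f$ at $X_i=\lambda_i$ because $\mathrm{rad}(\alpha)$ vanishes there), graded by $\omega_{\lambda_i}$ so that $\xi_i:=\rho(x_i-\lambda_i)$ has weight $-1$, $\tilde y:=\rho(y)$ has weight $r_i$, and the remaining generators have weight $0$. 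By \thref{dhm}, $\overline\phi$ induces a non-trivial homogeneous exponential map $\psi$ on $G$ with $\rho(\overline{A}^{\overline\phi})\subseteq G^{\psi}$, hence $\gr(\overline{A}^{\overline\phi})\subseteq G^{\psi}$. Since $\overline{A}^{\overline\phi}\subseteq\overline{B}$ and every element of $\overline{B}$ has $\omega_{\lambda_i}$-value $\leqslant 0$, the graded subring $\gr(\overline{A}^{\overline\phi})$ is concentrated in non-positive weights.

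The first genuine step is a transcendence-degree count. By \thref{lemma : properties}(ii), $\td_{\overline{k}}\gr(\overline{A}^{\overline\phi})=\td_{\overline{k}}\overline{A}^{\overline\phi}=m+1=\td_{\overline{k}}G^{\psi}$, so $G^{\psi}$ is algebraic over $\gr(\overline{A}^{\overline\phi})$. A homogeneous element $g\in G^{\psi}$ of positive weight, being algebraic over a non-positively graded subring of the \emph{domain} $G$, satisfies a homogeneous monic relation whose lower coefficients, having positive weight, must all vanish; hence $g^{N}=0$ and $g=0$. Therefore $G^{\psi}$ is itself concentrated in non-positive weights, and in particular $\tilde y\notin G^{\psi}$.

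The remaining and hardest part is to upgrade $\tilde y\notin G^{\psi}$ to $\psi(\xi_i)=\xi_i$ and then to transfer this back to $\overline\phi$. For the first, since $\xi_i$ carries the minimal weight, homogeneity forces $\psi(\xi_i)=\xi_i\,v$ with $v\equiv 1\pmod{U}$; playing the defining relation $\beta\,\xi_i^{r_i}\tilde y=f(z,t)$ against $\psi$ and comparing $U$-degrees should then force $v=1$, the delicate point being the control of $\psi$ on the weight-$0$ part of $G$. For the second, $\psi(\xi_i)=\xi_i$ says precisely that the leading form of $\overline\phi(x_i-\lambda_i)$ is $x_i-\lambda_i$, so all higher $U$-coefficients of $\overline\phi(x_i-\lambda_i)$ are $\omega_{\lambda_i}$-subordinate; \thref{r5}(ii),(iii) locate these coefficients, and the additivity $\phi_V\phi_U=\phi_{V+U}$, which rules out a purely multiplicative correction $\xi_i\mapsto\xi_i(1+cU)$, should force them to vanish. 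I expect this descent from the graded model $G$ back to $\overline\phi$ — controlling every subleading term, not merely the leading one — to be the main obstacle; it is exactly here that the exponential (rather than merely multiplicative) nature of $\overline\phi$ is essential, presumably via an induction on the $\omega_{\lambda_i}$-order of the remainder. Granting it, $x_i-\lambda_i\in\overline{A}^{\overline\phi}$, hence $x_i\in A^{\phi}$ for all $i$, which completes the proof.
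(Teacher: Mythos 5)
Your proposal uses the same toolkit as the paper (base change to $\overline{k}$, the weight $\omega_{\lambda_i}$, \thref{Admissible1}, homogenization via \thref{dhm}, factorial closedness, a transcendence-degree count), but the three steps that carry the actual content are, respectively, unjustified, false as stated, and admitted to be missing. First, the equality $\td_{\overline{k}}\gr(\overline{A}^{\overline{\phi}})=\td_{\overline{k}}\overline{A}^{\overline{\phi}}$ does not follow from \thref{lemma : properties}(ii), which says nothing about associated graded rings: passing to leading forms can destroy algebraic independence (already in $k[x,y]$ with the degree filtration, $x$ and $xy+1$ are independent but their leading forms $x$ and $xy$ are not), so producing $m+1$ algebraically independent elements in $\rho(\overline{A}^{\overline{\phi}})$ is a genuine task. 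It is exactly the task the paper's key device performs and which you skip: assuming $x_i\notin A^{\phi}$, take a transcendence basis $f_1,\dots,f_{m+1}$ of $\overline{A}^{\overline{\phi}}$ and write $f_j=g_j+\overline{x_i}h_j$ with $g_j$ free of $\overline{x_i}$; if $P(g_1,\dots,g_{m+1})=0$ then $\overline{x_i}$ divides $P(f_1,\dots,f_{m+1})\in\overline{A}^{\overline{\phi}}\setminus\{0\}$, and factorial closedness would force $\overline{x_i}\in\overline{A}^{\overline{\phi}}$; hence the $g_j$ are independent and $\rho(f_j)=g_j$. Note these leading forms land in $\overline{k}[\widetilde{x_1},\dots,\widetilde{x_{i-1}},\widetilde{x_{i+1}},\dots,\widetilde{x_m},\widetilde{z},\widetilde{t}]$, not merely in the non-positive part; this finer location is what makes the endgame work. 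Second, your vanishing claim for positive-weight invariants is wrong: algebraicity over a ring gives no \emph{monic} relation, and in a $\Z$-graded domain a positive-degree element can perfectly well be algebraic over a non-positively graded subring --- in your own $G$, $\tilde{y}$ satisfies the linear relation $(\beta\xi_i^{r_i})\tilde{y}-f(\tilde{z},\tilde{t})=0$ with coefficients of weights $-r_i$ and $0$ (or compare $u\in k[u,u^{-1}]$, algebraic over $k[u^{-1}]$). So your intermediate conclusion that $G^{\psi}$ sits in non-positive weights is not established.

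The step you defer with ``granting it'' is also not a repairable technicality but a structural dead end: \thref{dhm} gives only the one-way inclusion $\rho(\overline{A}^{\overline{\phi}})\subseteq G^{\psi}$, and there is no mechanism for descending invariance from the graded model back to $\overline{\phi}$; knowing $\psi(\xi_i)=\xi_i$ constrains only leading forms of $\overline{\phi}(x_i-\lambda_i)$, never the whole expansion. The paper avoids any descent by arguing by contradiction: assuming $x_i\notin A^{\phi}$, the $m+1$ independent invariants $g_j$ lie in $\overline{k}[\widetilde{x_j}\,(j\neq i),\widetilde{z},\widetilde{t}]$, which therefore is algebraic over $\overline{k}[g_1,\dots,g_{m+1}]$ and hence contained in $\tilde{A}^{\tilde{\phi}}$ (invariant rings are algebraically closed); then $f(\widetilde{z},\widetilde{t})=\widetilde{x_i}^{r_i}\tilde{\alpha}\tilde{y}\in\tilde{A}^{\tilde{\phi}}$, and factorial closedness forces $\widetilde{x_i},\tilde{y}\in\tilde{A}^{\tilde{\phi}}$, making $\tilde{\phi}$ trivial and contradicting \thref{dhm}. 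Observe that your weaker conclusion (a td-$(m+1)$ invariant subring of the non-positive part) cannot trigger this: such a subring could be $\overline{k}[\widetilde{x_j}\,(j\neq i),\xi_i,\widetilde{z}]$, whose algebraic closure in $G$ misses $f(\tilde{z},\tilde{t})$ entirely. The fix is to restructure your argument as the paper does --- suppose $x_i\notin A^{\phi}$, build the $g_j$, and derive the triviality contradiction at the graded level --- so that nothing ever needs to be transported back from $G$ to $\overline{A}$.
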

	
	\begin{proof}
		If possible suppose $x_i \notin A^{\phi}$, for some $i,\mi$. Consider the ring $\overline{A}:=A \otimes_k \overline{k}$. From \eqref{A2}, it is clear that 
		\begin{equation}
			\overline{A}=\dfrac{\overline{k}[X_1,\ldots,X_m,Y,Z,T]}{(X_1^{r_1}\cdots X_m^{r_m}\alpha_1(X_1)\cdots \alpha_m(X_m)Y-f(Z,T)-\overline{h}(X_1,\ldots,X_m,Z,T) )},
		\end{equation}
		for some $r_j \geqslant 1$, $\overline{h}\in \overline{k}^{[m+2]}$, $\alpha_j\in \overline{k}^{[1]}$ with $\alpha_j(0) \neq 0$ for every $j$, $1 \leqslant l \leqslant m$ and $X_i\mid \overline{h}$.
		Let $\overline{x_1},\ldots, \overline{x_m}, \overline{y}, \overline{z}, \overline{t}$ denote the images of $X_1,\ldots,X_m,Y,Z,T$ in $\overline{A}$.
		Consider the non-trivial exponential map $\overline{\phi}$ on $\overline{A}$ induced by $\phi$ (cf. \thref{lemma : properties}(iv)).
		Since $x_i \notin A^{\phi}$, it follows that $\overline{x_i} \notin \overline{A}^{\overline{\phi}}$.
		Further, since $A^{\phi} \subseteq k[x_1,\ldots,x_m,z,t]$, we have $\overline{A}^{\overline{\phi}} \subseteq \overline{k}[\overline{x_1},\ldots,\overline{x_m}, \overline{z}, \overline{t}]$. 
		By \thref{lemma : properties}(ii), $\td_k(\overline{A}^{\overline \phi})$ = $m+1$.
		Let $\{f_1, \ldots,f_{m+1}\}$ be an algebraically independent set of elements in $\overline{A}^{\overline{\phi}}$. Suppose 
		$$
		f_j=g_j(\overline{x_1},\ldots, \overline{x_{i-1}},\overline{x_{i+1}},\ldots,\overline{x_m},\overline{z}, \overline{t}) + \overline{x_i} h_j(\overline{x_1},\ldots,\overline{x_m},\overline{z}, \overline{t}),
		$$   
		for every $j$, $1 \leqslant j \leqslant m+1$. 
		If there exists a polynomial $P$ such that $P(g_1,\ldots,g_{m+1})=0$ then $\overline{x_i} \mid P(f_1,\ldots,f_{m+1})$. Now this would contradict the fact that $\overline{x_i} \notin \overline{A}^{\overline{\phi}}$ as $P(f_1,\ldots,f_{m+1}) \in \overline{A}^{\overline{\phi}}$ and $\overline{A}^{\overline{\phi}}$ is factorially closed (cf. \thref{lemma : properties}(i)).	
		Therefore, $\{g_1,\ldots,g_{m+1}\}$ is an algebraically independent set of elements in $\overline{k}[\overline{x_1},\ldots, \overline{x_{i-1}},\overline{x_{i+1}},\ldots,\overline{x_m},\overline{z}, \overline{t}]$.
		We now consider the degree function $\omega_i$ on $\overline{A}$, given by $\omega_i(\overline{x_i})=-1$,
		$\omega_i(\overline{y})=r_i$, $\omega_i(\overline{x_j})=0$, for $1 \leqslant j \leqslant m$, $j \neq i$, $\omega_i(\overline{z})=\omega_i(\overline{t})=0$. Then by \thref{Admissible1}, $\omega_i$ induces an admissible $\bZ$-filtration on $\overline{A}$ and the associated graded ring  is 
		\begin{equation}\label{6}
			\tilde{A}= \dfrac{\overline{k}[X_1,\ldots,X_m,Y,Z,T]}{(X_i^{r_i}\tilde{\alpha}(X_1,\ldots,X_{i-1},X_{i+1},\ldots,X_m)Y-f(Z,T) )},
		\end{equation}
		for some $\tilde{\alpha} \in \overline{k}[X_1,\ldots,X_{i-1},X_{i+1},\ldots,X_m]$.
		For any element $b \in \overline{A}$, let $\tilde{b}$ denotes its image in $\tilde{A}$.
		Now by \thref{dhm}, $\overline{\phi}$ induces a non-trivial exponential map $\tilde{\phi}$ on $\tilde{A}$ such that $\tilde{f_j} \in \tilde{A}^{\tilde{\phi}}$ for every $j$, $1 \leqslant j \leqslant m+1$. 
		Note that $$\tilde{f_j}=g_j(\widetilde{x_1},\ldots, \widetilde{x_{i-1}},\widetilde{x_{i+1}},\ldots,\widetilde{x_m},\widetilde{z},\widetilde{t}), \text{ for every } j, 1\leqslant j\leqslant m.$$ 
		Since $g_1,\ldots,g_{m+1}$ are algebraically independent it follows that $\overline{k}[\widetilde{x_1},\ldots, \widetilde{x_{i-1}},\widetilde{x_{i+1}},\ldots,\widetilde{x_m},\widetilde{z},\widetilde{t}]$ is algebraic over $\overline{k}[g_1,\ldots,g_{m+1}]$.
		Hence $\overline{k}[\widetilde{x_1},\ldots, \widetilde{x_{i-1}},\widetilde{x_{i+1}},\ldots,\widetilde{x_m},\widetilde{z},\widetilde{t}] \subseteq \tilde{A}^{\tilde{\phi}}$. But by \eqref{6}, this contradicts that $\tilde{\phi}$ is non-trivial.
		Thus we have $x_i \in A^{\phi}$ and since $i$ is arbitrary, $k[x_1,\ldots,x_m] \subseteq A^{\phi}$. 
		
		If $\dk(A)=k[x_1,\ldots,x_m,z,t]$, then for every prime divisor $p$ of $a$, $p \notin A^*$ and $f\notin k$. Therefore, by \thref{lem0}, we have $\ml(A)=k[x_1,\ldots,x_m]$. 
	\end{proof}
	
	Now we record an easy lemma which will be useful in proving the next two results which will describe $\dk(A)$, upto a certain condition on the roots of $a_i(X_i)$ for every $i$, $1 \leqslant i \leqslant m$. 
	
	\begin{lem}\thlabel{line}
		Let $f(Z,T) \in k[Z,T]$ be such that $k[Z,T]/(f)=k^{[1]}$ and there exists a system of coordinates $\{Z_1,T_1\}$ in $k[Z,T]$ such that $f(Z,T)=b_0(Z_1)+b_1(Z_1)T_1$ for some $b_0, b_1 \in k^{[1]}$. Then $k[Z,T]=k[f]^{[1]}$.  
	In particular, when $k$ is a field of positive characteristic and $g(Z,T)$ is a non-trivial line, then $g(Z,T)$ is not linear with respect to any system of coordinate in ${k}[Z,T]$.
	\end{lem}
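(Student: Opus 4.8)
The plan is to reduce everything to the concrete shape $f=b_0(Z)+b_1(Z)T$ and then identify the coordinate ring of the line as a localization of $k[Z]$. Since $\{Z_1,T_1\}$ is a system of coordinates, $k[Z,T]=k[Z_1,T_1]$, so after renaming I may assume from the outset that $f=b_0(Z)+b_1(Z)T$ with $b_0,b_1\in k^{[1]}$. Because $k[Z,T]/(f)=k^{[1]}$ is a domain, $(f)$ is prime and $f$ is irreducible. If $b_1=0$ then $f=b_0(Z)$ and $k[Z,T]/(f)=(k[Z]/(b_0))[T]$, which equals $k^{[1]}$ only when $b_0$ is linear, in which case $f$ is visibly a coordinate. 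So I may assume $b_1\neq 0$; then, $f$ having degree one in $T$, Gauss's lemma shows that irreducibility of $f$ forces it to be primitive, i.e. $\gcd_{k[Z]}(b_0,b_1)\in k^*$.

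The key step is to show that $R:=k[Z,T]/(f)\cong k[Z]_{b_1}$, the localization of $k[Z]$ inverting $b_1$. The $k$-algebra map $k[Z,T]\to k(Z)$ sending $Z\mapsto Z$ and $T\mapsto -b_0/b_1$ has image $k[Z,b_0/b_1]$, which has transcendence degree one and contains the irreducible $f$ in its kernel; hence the kernel is a height-one prime containing $(f)$, so it equals $(f)$, giving $R\cong k[Z,b_0/b_1]\subseteq k(Z)$. Using a B\'ezout relation $p b_0+q b_1=1$ (available by coprimality) together with $b_0=-b_1\bar{T}$ in $R$, I obtain $b_1(q-p\bar{T})=1$, so $b_1$ is a unit in $R$. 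Consequently $\bar{T}=-b_0 b_1^{-1}\in k[Z,b_1^{-1}]$, and therefore $R=k[Z][b_1^{-1}]=k[Z]_{b_1}$.

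Finally I compare unit groups. The units of $k[Z]_{b_1}$ are exactly the elements $c\prod_i p_i^{e_i}$ with $c\in k^*$, the $p_i$ the distinct irreducible factors of $b_1$, and $e_i\in\bZ$, whereas $(k^{[1]})^*=k^*$. Thus $R=k^{[1]}$ forces $b_1$ to have no nonconstant irreducible factor, i.e. $b_1\in k^*$. Then $T=b_1^{-1}(f-b_0(Z))$, so $k[Z,T]=k[Z,f]=k[f][Z]=k[f]^{[1]}$, which is the first assertion. The second assertion is then immediate: if $g$ were a non-trivial line that were linear with respect to some system of coordinates, the first part would yield $k[Z,T]=k[g]^{[1]}$, contradicting non-triviality; the role of positive characteristic is only to ensure that non-trivial lines exist at all.

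The main obstacle is the identification $R\cong k[Z]_{b_1}$ — concretely, verifying that the evaluation kernel is precisely $(f)$ and that $b_1$ becomes a unit via the B\'ezout relation — since once this is in place the unit-group comparison is routine. Two points require care along the way: the harmless reduction allowing $Z_1,T_1$ to be treated as the given variables, and the degenerate case $b_1=0$, which must be disposed of separately before invoking primitivity.
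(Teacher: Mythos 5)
Your proof is correct. Note that the paper itself offers no proof of this lemma at all --- it is introduced with ``Now we record an easy lemma'' and stated bare --- so there is nothing to compare against line by line; your argument supplies the missing proof, and it is the natural one. The route you take is: reduce to $f=b_0(Z)+b_1(Z)T$, dispose of $b_1=0$, use irreducibility of $f$ to get $\gcd_{k[Z]}(b_0,b_1)\in k^*$, identify $k[Z,T]/(f)$ with $k[Z][1/b_1]$ via the evaluation $T\mapsto -b_0/b_1$ (the kernel being a height-one prime containing the height-one prime $(f)$), and then compare unit groups: $(k^{[1]})^*=k^*$ forces $b_1\in k^*$, whence $T\in k[Z,f]$ and $k[Z,T]=k[f][Z]=k[f]^{[1]}$. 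All steps check out, including the B\'ezout trick showing $b_1$ is a unit in the quotient and the fact that the units of $k[Z][1/b_1]$ are exactly $c\prod_i p_i^{e_i}$ with $c \in k^*$ and $e_i \in \mathbb{Z}$. The only elision worth flagging is in the degenerate case $b_1=0$: the claim that $(k[Z]/(b_0))[T]=k^{[1]}$ forces $b_0$ linear deserves one more line (a domain $R$ with $R[T]=k^{[1]}$ is zero-dimensional, hence a field, and its nonzero elements are then units of $k^{[1]}$, i.e.\ lie in $k^*$, so $R=k$); this is routine and does not affect correctness. The ``in particular'' statement is handled exactly as intended: positive characteristic enters only through the existence of non-trivial lines, and linearity in any coordinate system would contradict non-triviality by the first part.
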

	
	\begin{lem}\thlabel{r4}
		Suppose for every $i$, $1 \leqslant i \leqslant m$ there exists a multiple root $\lambda_{i}$ of $a_i(X_i)$ in $\overline{k}$, such that $f(Z,T)$ is not linear with respect to any system of coordinate in $\overline{k}[Z,T]$. Then the following hold.
		
		\smallskip
		\noindent
		{\rm (i)}
		Let $w_{\lambda_{i}}$ denotes the degree function on $\overline{A}:= A \otimes_k \overline{k}$ as in \thref{r5}\rm(i), then for every element $a \in \dk(A)$, $w_{\lambda_{i}}(a) \leqslant 0$. In particular, $\dk(A) \subsetneq A$.
		
		\smallskip
		\noindent
		{\rm (ii)} If $\mu$ denotes the multiplicity of the root $\lambda_{i}$ in $a_i(X_i)$, then $\dk(A) \subseteq \overline{k}[x_1,\ldots,x_m, (x_i-\lambda_{i})^{\mu}y,z,t] $. 
	\end{lem}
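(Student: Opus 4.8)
The plan is to deduce both parts from \thref{lin2} by passing to the algebraic closure. First I would work over $\overline{A}:=A\otimes_k\overline{k}$ and reduce part (i) to the corresponding bound for $\dk(\overline{A})$. By \thref{lemma : properties}(iv), every non-trivial exponential map $\phi$ on $A$ extends to a non-trivial exponential map on $\overline{A}$ with ring of invariants $A^{\phi}\otimes_k\overline{k}$; hence $\dk(A)\subseteq\dk(\overline{A})$, and since $w_{\lambda_i}$ is a degree function on $\overline{A}$ (\thref{r5}(i)) it suffices to show that every element of $\dk(\overline{A})$ has $w_{\lambda_i}$-value $\leqslant 0$. I would argue by contradiction, supposing $\dk(\overline{A})$ contains an element of positive $w_{\lambda_i}$-value.

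Next I would put $\overline{A}$ into the normal form of \thref{lin2}. For each $j$, $1\leqslant j\leqslant m$, the chosen multiple root $\lambda_j$ of $a_j$ has multiplicity $r_j\geqslant 2$, so after the invertible translation $X_j\mapsto X_j+\lambda_j$ over $\overline{k}$ one may write $a_j(X_j+\lambda_j)=X_j^{r_j}\alpha_j(X_j)$ with $r_j>1$ and $\alpha_j(0)\neq 0$. Since $X_i-\lambda_i$ is a prime factor of $\alpha=a_1\cdots a_m$ over $\overline{k}$, it divides the radical $\prod_{j}p_j$ occurring in \eqref{A2}, so the non-$f$ part of $F$ acquires a factor of the translated variable; relabelling so that $i=1$, this realises
$$
\overline{A}\cong\frac{\overline{k}[X_1,\ldots,X_m,Y,Z,T]}{\bigl(X_1^{r_1}\cdots X_m^{r_m}\alpha_1(X_1)\cdots\alpha_m(X_m)Y-f(Z,T)-X_1 h'\bigr)}
$$
for some $h'\in\overline{k}^{[m+2]}$, with each $r_j>1$ and $\alpha_j(0)\neq 0$, and with the degree function $w_1$ of \thref{lin2} corresponding to $w_{\lambda_i}$. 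Crucially, $f(Z,T)$ is untouched by the translations.

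Because $\overline{k}$ is infinite, \thref{lin2}(i) now applies: as $\dk(\overline{A})$ contains an element of positive $w_1$-value, there would exist a system of coordinates $Z_1,T_1$ of $\overline{k}[Z,T]$ and $a_0,a_1\in\overline{k}^{[1]}$ with $f(Z,T)=a_0(Z_1)+a_1(Z_1)T_1$; that is, $f$ would be linear with respect to a coordinate system in $\overline{k}[Z,T]$, contradicting the hypothesis. Hence every element of $\dk(\overline{A})$, and therefore of $\dk(A)$, has $w_{\lambda_i}$-value $\leqslant 0$. Since $y\in A$ satisfies $w_{\lambda_i}(y)=r_i>0$, we get $y\notin\dk(A)$ and thus $\dk(A)\subsetneq A$, which finishes (i).

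For (ii), with $\mu=r_i$ the multiplicity of $\lambda_i$, part (i) gives $w_{\lambda_i}(a)\leqslant 0$ for every $a\in\dk(A)$; \thref{r5}(ii) then places each such $a$ in $\overline{k}[x_1,\ldots,x_m,(x_i-\lambda_i)^{\mu}y,z,t]$, which is precisely the asserted inclusion. The step requiring the most care is the reduction: one must verify that the simultaneous translations genuinely bring $\overline{A}$ into the shape of \thref{lin2} — namely that $(X_i-\lambda_i)$ divides the non-$f$ part of $F$ (so the ``$X_1h$'' form holds, which rests on $(X_i-\lambda_i)$ being a prime factor of the radical $\prod_j p_j$) and that each translated factor $a_j(X_j+\lambda_j)$ has a multiple root at the origin with unit constant term $\alpha_j(0)\neq 0$. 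Once this normal form is secured, the contrapositive of \thref{lin2} together with \thref{r5} completes the argument.
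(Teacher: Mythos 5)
Your proof is correct and takes essentially the same route as the paper: the paper's own proof consists of the two citations ``(i) Follows from \thref{lin2}'' and ``(ii) Follows from (i) and \thref{r5}(ii)'', and your write-up supplies exactly the details these citations leave implicit --- the base change to $\overline{k}$ via \thref{lemma : properties}(iv) giving $\dk(A)\subseteq\dk(\overline{A})$, the translations $X_j\mapsto X_j+\lambda_j$ bringing $\overline{A}$ into the normal form of \thref{lin2} (including the key check that $X_i-\lambda_i$ divides the non-$f$ part of $F$ because it divides the radical $\prod_j p_j$), and the contradiction with the hypothesis that $f$ is not linear in any coordinate system of $\overline{k}[Z,T]$. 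All of these steps are handled correctly, so nothing further is needed.
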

	\begin{proof}
		(i) Follows from \thref{lin2}.
		
		\smallskip
		\noindent
		(ii)  Follows from (i) and \thref{r5}(ii).
	\end{proof}
	
	\begin{cor}\thlabel{c5}
		For every $i$, let  $a_i(X_i)= \prod_{j=1}^{n_i} (X_i- \lambda_{ij})^{\mu_{ij}}$ in $\overline{k}[X_i]$, with $\mu_{ij} >1$.
		Suppose that $f(Z,T)$ is not linear with respect to any coordinate system in $\overline{k}[Z,T]$.
		Then $\dk(A)=k[x_1,\ldots,x_m,z,t]$.
	\end{cor}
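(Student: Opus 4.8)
The plan is to prove the equality by establishing the two inclusions separately. The inclusion $k[x_1,\ldots,x_m,z,t] \subseteq \dk(A)$ is immediate from \thref{lem0}(i), so all the content lies in the reverse inclusion $\dk(A) \subseteq k[x_1,\ldots,x_m,z,t]$. To set up for it, I first pass to $\overline{A} := A \otimes_k \overline{k}$ and record the embedding $A \subseteq \overline{A} \subseteq \overline{k}[x_1,\ldots,x_m,z,t][\alpha^{-1}]$ obtained by inverting $\alpha$ (so that $y = F/\alpha$ becomes redundant). Writing $R := k[x_1,\ldots,x_m,z,t]$, $\overline{R} := \overline{k}[x_1,\ldots,x_m,z,t]$, and $\pi_{ij} := x_i - \lambda_{ij}$, the key structural fact is that $\overline{R}$ is a UFD in which $\alpha = \prod_{i,j}\pi_{ij}^{\mu_{ij}}$, the $\pi_{ij}$ being distinct primes; hence every element of $A$, written in lowest terms over $\overline{R}$, has a denominator whose prime factors lie among the $\pi_{ij}$.

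Next I invoke the hypotheses. Since every root $\lambda_{ij}$ is multiple ($\mu_{ij}>1$) and $f$ is not linear in any coordinate system of $\overline{k}[Z,T]$, I can apply \thref{r4}(ii) to each pair $(i,j)$ to obtain $\dk(A) \subseteq \overline{k}[x_1,\ldots,x_m,\pi_{ij}^{\mu_{ij}} y,z,t]$. The main step is then to show that the intersection of these subrings over all $i,j$ is exactly $\overline{R}$. The crucial observation is that $\pi_{ij}^{\mu_{ij}} y = F/\delta_{ij}$, where $\delta_{ij} := \alpha/\pi_{ij}^{\mu_{ij}}$ is coprime to $\pi_{ij}$ (because $\pi_{ij}$ divides $\alpha$ to exactly multiplicity $\mu_{ij}$); consequently $\overline{k}[x_1,\ldots,x_m,\pi_{ij}^{\mu_{ij}}y,z,t] \subseteq \overline{R}[\delta_{ij}^{-1}]$, and any element of this ring, written in lowest terms over $\overline{R}$, has a denominator coprime to $\pi_{ij}$. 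Thus an element $b \in \dk(A)$ has a denominator coprime to every $\pi_{ij}$; since by the previous paragraph its denominator can only involve the primes $\pi_{ij}$, it must be a unit, forcing $b \in \overline{R}$.

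Finally I descend from $\overline{k}$ to $k$ by checking $A \cap \overline{R} = R$, the intersection taken inside $\overline{A}$. This follows from faithful flatness of $\overline{k}$ over $k$: tensoring the exact sequence $0 \to R \to A \to A/R \to 0$ of $k$-vector spaces with $\overline{k}$ remains exact and identifies $\overline{A}/\overline{R}$ with $(A/R)\otimes_k\overline{k}$, while the natural map $A/R \to (A/R)\otimes_k\overline{k}$ is injective. An element $b \in A$ lying in $\overline{R}$ therefore has zero image in $(A/R)\otimes_k\overline{k}$, hence zero image in $A/R$, so $b \in R$. Combining the three steps gives $\dk(A) \subseteq A \cap \overline{R} = R = k[x_1,\ldots,x_m,z,t]$, and together with \thref{lem0}(i) this yields the asserted equality.

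I expect the principal obstacle to be the intersection step in the second paragraph: one must track precisely which primes can occur in denominators and verify that multiplying $y$ by $\pi_{ij}^{\mu_{ij}}$ clears exactly the factor $\pi_{ij}$ out of $\alpha$, so that simultaneous membership in all the rings $\overline{k}[x_1,\ldots,x_m,\pi_{ij}^{\mu_{ij}}y,z,t]$ eliminates every prime from the denominator at once. The descent step is routine once phrased via faithful flatness, which is the clean way to avoid any appeal to separability of $\overline{k}/k$.
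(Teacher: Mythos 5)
Your proposal is correct and takes essentially the same route as the paper: both apply \thref{r4}(ii) at every (multiple) root $\lambda_{ij}$, identify the intersection $\bigcap_{i,j}\overline{k}[x_1,\ldots,x_m,(x_i-\lambda_{ij})^{\mu_{ij}}y,z,t]$ with $\overline{k}[x_1,\ldots,x_m,z,t]$, descend to $k$, and conclude with \thref{lem0}(i). The only difference is that you spell out the denominator-tracking argument for the intersection and the faithful-flatness descent to $k$, both of which the paper asserts without detail.
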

	\begin{proof}
		By \thref{r4}(ii), $\dk(A) \subseteq \bigcap_{i=1}^m (\bigcap_{j=1}^{n_i} \overline{k}[x_1,\ldots,x_m, (x_i-\lambda_{ij})^{\mu_{ij}}y,z,t]) =\overline{k}[x_1,\ldots,x_m,z,t]$. Therefore, $\dk(A) \subseteq k[x_,\ldots,x_m,z,t]$. Now the assertion follows by \thref{lem0}(i).
	\end{proof}
	
	The following result describes isomorphisms between two rings of the form $A$ as in \eqref{A2}, when every root of $a_i(X_i)$ is a multiple root. 
	
	\begin{thm}\thlabel{iso2}
		Let 
		$$
		A_1:=\frac{k[X_1,\ldots,X_m,Y,Z,T]}{(a_1(X_1)\ldots a_m(X_m)Y-f(Z,T)-h_1(X_1,\ldots,X_m,Z,T))}
		$$
		and 
		$$
		A_2:= \frac{k[X_1,\ldots,X_m,Y,Z,T]}{(b_1(X_1)\ldots b_m(X_m)Y-g(Z,T)-h_2(X_1,\ldots,X_m,Z,T))}
		$$
		be such that $a_i(X_i):=\prod_{1 \leqslant j \leqslant n}(X_i-\lambda_{ij})^{d_{ij}}$ and $b_i(X_i):=\prod_{1 \leqslant j \leqslant n^{\prime}}(X_i-\mu_{ij})^{e_{ij}}$ in $\overline{k}[X_i]$ with $d_{ij},e_{ij} >1$ for all $i,j$
		and $\prod_{1 \leqslant j \leqslant n}(X_i-\lambda_{ij})\mid h_1$,  $\prod_{1 \leqslant j \leqslant n}(X_i-\mu_{ij})\mid h_2$ in $\overline{k}[\X]$. 
		Let $x_1,\ldots,x_m,y,z,t$ and $x_1^{\prime},\ldots,x_m^{\prime},y^{\prime},z^{\prime},t^{\prime}$ denote the images of $X_1,\ldots,X_m,Y,Z,T$ in $A_1$ and $A_2$ respectively.
		Let $E_1:=k[x_1,\ldots,x_m]$, $E_2:=k[x_1^{\prime},\ldots,x_m^{\prime}]$, $B_1:=k[x_1,\ldots,x_m,z,t]$ and $B_2:=k[x_1^{\prime},\ldots,x_m^{\prime},z^{\prime},t^{\prime}]$.
		Suppose $f(Z,T)$ and $g(Z,T)$ are not linear with respect to any system of coordinate in $\overline{k}[Z,T]$ and $\phi: A_1 \rightarrow A_2$ is an isomorphism. Then the following hold.
		\begin{itemize}
			\item[\rm (i)] $\phi$ restricts to an isomorphism from $B_1$ to $B_2$ and from $E_1$ to $E_2$.
			
			\item [\rm (ii)] For every $l$, $1 \leqslant l \leqslant m$, there exists $i$, $1 \leqslant i \leqslant m$ such that $\phi(x_i)= \nu x_l^{\prime} + \mu$, for some $\nu \in k^{*}$ and $\mu \in k$.
			
			\item[\rm (iii)]  If  $\phi(x_i)= \nu x_l^{\prime} + \mu$, then $a_i(X_i)$ and $b_l(X_l)$ have equal number of roots in $\overline{k}$ with equal multiplicities. Further $\phi(a_i(x_i))=\gamma b_l(x_l^{\prime})$ for some $\gamma \in k^*$.
		\end{itemize} 
	\end{thm}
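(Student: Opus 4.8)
The plan is to reduce the whole statement to the structural theorem \thref{iso}, which I can invoke once I know that the Derksen and Makar--Limanov invariants of $A_1$ and $A_2$ have the prescribed shapes; I then sharpen the conclusions by base changing to $\overline{k}$, where the factors $a_i$ and $b_i$ split into linear primes.

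First I would record that both rings fall under \thref{c5}. The hypotheses $d_{ij},e_{ij}>1$ say that every root of each $a_i$ (resp. $b_i$) over $\overline{k}$ is a multiple root, and the divisibility conditions $\prod_{j}(X_i-\lambda_{ij})\mid h_1$ (resp. the analogous one for $h_2$) say that $\mathrm{rad}(\alpha_1)\mid h_1$ (resp. $\mathrm{rad}(\alpha_2)\mid h_2$), so that $A_1,A_2$ are genuinely of the form \eqref{A2}. Since $f$ and $g$ are assumed non-linear with respect to every coordinate system of $\overline{k}[Z,T]$, \thref{c5} gives $\dk(A_1)=B_1$, $\dk(A_2)=B_2$, and then \thref{ml} gives $\ml(A_1)=E_1$, $\ml(A_2)=E_2$. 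As $\dk$ and $\ml$ are isomorphism invariants, $\phi$ carries $B_1=\dk(A_1)$ onto $B_2=\dk(A_2)$ and $E_1=\ml(A_1)$ onto $E_2=\ml(A_2)$; this is exactly (i), and it verifies all the hypotheses of \thref{iso} for $\phi$.

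Next, for (ii), I would extend $\phi$ to $\overline{\phi}=\phi\otimes\mathrm{id}\colon\overline{A_1}\to\overline{A_2}$. I would first check that $\overline{A_1},\overline{A_2}$ again satisfy the hypotheses of \thref{iso} over $\overline{k}$: they keep the shape \eqref{A2} with all multiplicities $>1$ and with $f,g$ still non-linear, so \thref{c5} and \thref{ml} recompute $\dk(\overline{A_1})=\overline{k}[\overline{x_1},\ldots,\overline{x_m},\overline{z},\overline{t}]$ and $\ml(\overline{A_1})=\overline{k}[\overline{x_1},\ldots,\overline{x_m}]$, and similarly for $\overline{A_2}$. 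The gain is that now $\overline{\alpha_1}=\prod_{i,j}(x_i-\lambda_{ij})^{d_{ij}}$ and $\overline{\alpha_2}$ factor into the \emph{linear} primes $x_i-\lambda_{ij}$ and $x_l'-\mu_{lj'}$. Applying \thref{iso}(ii) to $\overline{\phi}$, each prime $x_i-\lambda_{ij}$ is sent to a scalar multiple of some $x_l'-\mu_{lj'}$; hence $\phi(x_i)=\overline{\phi}(x_i)$ is affine in the single variable $x_l'$, say $\phi(x_i)=\nu x_l'+\mu$, and since $\phi(x_i)\in k[x_1',\ldots,x_m']$ the scalars lie in $k$, i.e. $\nu\in k^*$, $\mu\in k$. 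Finally, because $\phi|_{E_1}\colon k[x_1,\ldots,x_m]\to k[x_1',\ldots,x_m']$ is an isomorphism by (i), the index assignment $i\mapsto l(i)$ must be a permutation of $\{1,\ldots,m\}$, so every $x_l'$ is hit; this is (ii).

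For (iii), assuming $\phi(x_i)=\nu x_l'+\mu$, I would use that $i\mapsto l(i)$ is a bijection sending distinct source variables to distinct target variables, so the $x_l'$-primes of $\overline{\alpha_2}$ are precisely the images of the $x_i$-primes of $\overline{\alpha_1}$. Thus $\lambda_{ij}\mapsto(\lambda_{ij}-\mu)/\nu$ is a bijection from the roots of $a_i$ onto those of $b_l$, giving equal numbers of roots, and \thref{iso}(iii) forces equality of the paired multiplicities $d_{ij}=e_{lj'}$. Consequently $\phi(a_i(x_i))=\prod_j\bigl(\nu(x_l'-(\lambda_{ij}-\mu)/\nu)\bigr)^{d_{ij}}=\nu^{\deg a_i}\,b_l(x_l')$, which is the asserted identity with $\gamma=\nu^{\deg a_i}\in k^*$. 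The hard part will be the base-change step: one must make sure the $\dk$ and $\ml$ computations of \thref{c5} and \thref{ml}, and hence all hypotheses of \thref{iso}, genuinely survive tensoring with $\overline{k}$, and that the passage from ``primes map to scalar multiples of primes'' to ``$\phi(x_i)$ is affine in a single variable with a bijective variable-assignment'' is executed carefully.
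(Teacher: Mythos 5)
Your proposal is correct and follows essentially the same route as the paper: establish $\dk(A_i)=B_i$ and $\ml(A_i)=E_i$ via \thref{c5} and \thref{ml} to get (i), then base change to $\overline{k}$ so that $\alpha_1,\alpha_2$ split into linear primes and apply \thref{iso}(ii),(iii) to the induced isomorphism $\overline{\phi}$ to get (ii) and (iii). Your write-up is in fact slightly more explicit than the paper's at two points the paper leaves implicit --- the injectivity/permutation argument for the variable assignment $i\mapsto l(i)$ and the computation $\gamma=\nu^{\deg a_i}$ --- but these are elaborations, not deviations.
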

	\begin{proof}
		(i) 
		Since $f(Z,T)$ and $g(Z,T)$ are both not linear with respect to any system of coordinate in $\overline{k}[Z,T]$, by \thref{c5}, $\dk(A_i)=B_i$ and hence by \thref{ml}, $\ml(A_i)=E_i$ for $i=1,2$. Thus the result follows.

		\smallskip
		\noindent
		(ii)
		Since $\phi:A_1 \rightarrow A_2$ is an isomorphism, it induces an isomorphism $\overline{\phi}: \overline{A_1}:= A_1 \otimes_k \overline{k} \rightarrow \overline{A_2}:=A_2 \otimes_k \overline{k}$.	
		Further from \thref{c5} it follows that $\dk(\overline{A_i})=\overline{B_i}:=B_i \otimes_k \overline{k}$ and $\ml(\overline{A_i})=\overline{E_i}:=E_i \otimes_k \overline{k}$. 
		Now since $\overline{\phi}$ is an isomorphism, from \thref{iso}(ii) we know that for every $l$, $1 \leqslant l \leqslant m$ and a prime divisor $(x_l^{\prime} - \mu_{lj_l})$ of $b_l(x_l^{\prime})$ in $\overline{B_2}$, there exist some $i,\mi$ and a prime divisor $(x_i-\lambda_{ij_i})$ of $a_i(x_i)$ in $\overline{B_1}$
		such that $\overline{\phi}(x_i-\lambda_{ij_i})=\nu (x_l^{\prime} - \mu_{lj_l})$ for some $\nu \in \overline{k}^*$. Now since $\overline{\phi} \mid_{B_1}= \phi$ and $\phi(B_1)=B_2$, we have the desired result.
		
		\smallskip
		\noindent
		(iii) If $\phi(x_i)= \nu x_l^{\prime} + \mu$, 
		considering $\overline{\phi}: \overline{A_1} \rightarrow \overline{A_2}$ we get that a prime divisor $(x_i-\lambda_i)$ of $a_i(x_i)$ is mapped to a prime divisor $(x_l^{\prime}-\mu_l)$ of $b_l(x_l^{\prime})$.
		Now from \thref{iso}(ii), it follows that there is an one to one correspondence between the roots of $a_i(X_i)$ and $b_l(X_l)$ in $\overline{k}$. Now for a root $\lambda_{ij_i}$ of $a_i(X_i)$ if 
		$\overline{\phi}(x_i-\lambda_{ij_i})=(x_l^{\prime} - \mu_{lj_l})$
		for some root $\mu_{lj_l}$ of $b_l(X_l)$, then by \thref{iso}(iii), $\lambda_{ij_i}$ and $\mu_{lj_l}$ have the same multiplicity.
		Therefore, it follows that $\phi(a_i(x_i))= \gamma b_l(x_l^{\prime})$ for some $\gamma \in k^*$.
	\end{proof}
	
	The next result characterizes the automorphisms of $A$ as in \eqref{A2} when every root of $a_i(X_i)$ is a multiple root for every $i$, $1 \leqslant i \leqslant m$.
	
	\begin{thm}\thlabel{auto}
		Let $A$ be the affine domain as in \eqref{A2}, with  $a_i(X_i)= \prod_{1 \leqslant j \leqslant n_i} (X_i- \lambda_{ij})^{d_{ij}}$ in $\overline{k}[X_i]$ with $d_{ij}>1,\, \mi$.
		Let $x_1,\ldots,x_m,y,z,t$ be the images of $X_1,\ldots,X_m,Y,Z,T$ in $A$.
		Suppose that $f(Z,T)$ is not linear with respect to any coordinate system in $\overline{k}[Z,T]$. If $\phi \in \Aut_{k}(A)$, then the following hold:
		
		\begin{enumerate}
			\item [\rm(i)]  $\phi$ restricts to an automorphism of $E:=k[x_1,\ldots,x_m]$ and $B:=k[x_1,\ldots,x_m,z,t]$.
			
			\item[\rm(ii)] For every $i$, $1 \leqslant i \leqslant m$, there exists $j$, $1 \leqslant j \leqslant m$ such that $\phi(a_i(x_i))= \gamma a_j(x_j)$ for some $\gamma \in k^*$.
			
			\item[\rm(iii)] $\phi(I)=I$, where $I=\left( \alpha(x_1,\ldots,x_m), F(x_1,\ldots,x_m,z,t) \right)B$.

		\end{enumerate}
		
		Conversely, if $\phi \in \End_k(A)$ satisfies conditions \rm(i) and \rm(iii), then $\phi \in \Aut_k(A)$.
	\end{thm}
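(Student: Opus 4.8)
The forward direction I would obtain directly from the results already established. Since $f(Z,T)$ is not linear with respect to any coordinate system in $\overline{k}[Z,T]$ and every $a_i$ has only multiple roots, \thref{c5} gives $\dk(A)=B$, and \thref{ml} then gives $\ml(A)=E$. Both invariants are preserved by any $\phi\in\Aut_k(A)$, which yields (i). For (ii) I would specialise \thref{iso2} to $A_1=A_2=A$: its part (ii) produces for each $l$ an index $i$ with $\phi(x_i)=\nu x_l+\mu$, and since the $x_l$ are algebraically independent and $\phi$ is injective, this assignment is a bijection of $\{1,\dots,m\}$; hence every $x_i$ is of this form and \thref{iso2}(iii) gives $\phi(a_i(x_i))=\gamma a_j(x_j)$. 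Statement (iii) is then exactly \thref{iso}(v) for $A_1=A_2=A$, whose hypotheses on $\dk$ and $\ml$ have just been checked.

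For the converse, set $\theta:=\phi|_B\in\Aut(B)$. The plan is to show $\phi$ is surjective and invoke the fact that a surjective endomorphism of a Noetherian ring is an automorphism. Because $y=F/\alpha$ lies in the field of fractions of $B$, one has $\operatorname{Frac}(A)=\operatorname{Frac}(B)$; as $\theta$ is injective with image $\theta(B)=B$ of full dimension $m+2=\dim A$, the isomorphism $A/\ker\phi\cong\operatorname{Im}\phi\supseteq B$ forces $\ker\phi=0$. Since $\phi(B)=B$, surjectivity reduces to proving $y\in\operatorname{Im}\phi=B[\phi(y)]$.

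The heart of the argument, and the step I expect to be the main obstacle, is to deduce $\phi(\alpha)=\gamma\alpha$ for some $\gamma\in k^*$ from (i) and (iii) alone. I would argue in $\Spec B$. Writing $\alpha=\prod_j p_j^{r_j}$ with the $p_j\in E$ the distinct prime factors, the minimal primes of $I=(\alpha,F)B$ are the height-two primes $\mathfrak q$ containing some $p_j$; the $z,t$-part of $F$ forces $\mathfrak q\cap E=p_jE$, and the colength $\operatorname{length}(B_{\mathfrak q}/I_{\mathfrak q})$ recovers the multiplicity $r_j$. By (iii) $\theta$ preserves $I$ and by (i) it preserves $E$, so it permutes these minimal primes and hence the ideals $p_jE$, giving $\theta(p_j)=c_jp_{\sigma(j)}$ with $c_j\in k^*$; being an automorphism it preserves the colengths, forcing $r_{\sigma(j)}=r_j$. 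Therefore $\theta(\alpha)=\prod_j(c_jp_{\sigma(j)})^{r_j}=\gamma\alpha$. The delicate point is precisely this matching of multiplicities: it is where (iii) is indispensable, as opposed to the weaker fact that $\phi$ merely permutes the reduced components of $V(\alpha)$. The computation is cleanest when $f$ is irreducible (a non-trivial line); for a general non-linear $f$ I would run the same colength computation at each minimal prime of $(f)$ in $k[z,t]$.

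Granting $\phi(\alpha)=\gamma\alpha$, I would finish as follows. Condition (iii) gives $(\alpha,\phi(F))B=(\alpha,F)B$, so $\phi(F)$ and $F$ generate the same ideal in $B/\alpha B$. The associated primes of $\alpha B$ are the $p_jB$, and $F\equiv f\not\equiv 0\pmod{p_j}$, so $F$ is a nonzerodivisor modulo $\alpha$; hence $\phi(F)=uF+\alpha w$ with $u\in B$ a unit modulo $\alpha$ and $w\in B$. Then $\phi(y)=\gamma^{-1}(uy+w)$, so $\operatorname{Im}\phi=B[uy]$. Picking $u^{*},s\in B$ with $uu^{*}=1+\alpha s$ gives $u^{*}(uy)=y+sF$, whence $y=u^{*}(uy)-sF\in B[uy]$. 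Thus $\operatorname{Im}\phi=B[y]=A$, so $\phi$ is surjective and therefore $\phi\in\Aut_k(A)$.
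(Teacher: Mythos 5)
Your forward direction coincides with the paper's: it derives (i) from \thref{c5} and \thref{ml}, and (ii), (iii) from \thref{iso2} and \thref{iso}(v), exactly as the paper does. The second half of your converse is also sound once $\phi(\alpha)=\gamma\alpha$ is granted: your unit-modulo-$\alpha$ computation is a correct, if slightly more roundabout, version of the paper's argument, which simply writes $F=\alpha u+\phi(F)v$ from $\phi(I)=I$ and exhibits the preimage $\widetilde{u}+\gamma y\widetilde{v}$ of $y$. The genuine problem is the step you yourself call the heart: deducing $\phi(\alpha)=\gamma\alpha$ by matching multiplicities through colengths.

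For a minimal prime $\mathfrak{q}$ of $I$ lying over $p_jE$, the colength is not $r_j$ but $\operatorname{length}(B_{\mathfrak{q}}/I_{\mathfrak{q}})=r_j\cdot\operatorname{length}\bigl(B_{\mathfrak{q}}/(p_j,F)B_{\mathfrak{q}}\bigr)$, and the second factor need not be the same for different $j$: it is governed by how $f$ factors over the residue field of $p_j$, which varies with $j$. Concretely, in characteristic $p$ take $f=Z^p-cT^p$ with $c\in k\setminus k^p$; this $f$ is not linear in any coordinate system of $\overline{k}[Z,T]$, so it is allowed by the theorem (which does not assume $f$ is a line, the case where your computation is clean). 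With $m=1$ and $a_1(X_1)=(X_1-\lambda)^{2p}(X_1^p-c)^2$, all roots are multiple, $p_1=X_1-\lambda$ has $r_1=2p$ with auxiliary factor $1$ ($f$ stays irreducible over the residue field $k$), while $p_2=X_1^p-c$ has $r_2=2$ with auxiliary factor $p$ ($f$ becomes $(Z-c^{1/p}T)^p$ over $k(c^{1/p})$): both colengths equal $2p$, so your test cannot rule out an automorphism matching $p_1$ with $p_2$ even though $r_1\neq r_2$. Hence ``being an automorphism preserves the colengths, forcing $r_{\sigma(j)}=r_j$'' fails as stated, and your suggested remedy for general $f$ (running the computation at the minimal primes of $(f)$) meets the same obstruction. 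The gap is repairable --- reducing $\theta(I)=I$ modulo $p_{\sigma(j)}$ shows $\theta\bigl((p_j,F)B\bigr)=(p_{\sigma(j)},F)B$, which equates the auxiliary factors --- but the paper sidesteps all of this with one observation: $I\cap E=\alpha E$, so $\phi(I)=I$ and $\phi(E)=E$ give $\phi(\alpha)E=\alpha E$, whence $\phi(\alpha)=\gamma\alpha$ with $\gamma\in E^{*}=k^{*}$ in a single stroke (and unique factorization in $E$ then yields the matching of the $p_j$ and of their multiplicities for free).
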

	\begin{proof}
		(i), (ii) and (iii) follows from \thref{iso2}(i), \thref{iso2}(iii) and \thref{iso}(v) respectively. 
		
		We now show the converse part. 
		From (i) and (iii) it follows that $\phi(B)=B$, $\phi(E)=E$ and $\phi(I)=I$. Hence $\phi(I \cap E)=I \cap E$.
		Since 
		$
		I \cap E=\left( \alpha(x_1,\ldots,x_m) \right)E,
		$ 
		\begin{equation}\label{7}
			\phi(\alpha(x_1,\ldots,x_m)) = \gamma \alpha(x_1,\ldots,x_m),
		\end{equation}
		for some $\gamma \in k^*$. 
		Now since $\phi$ is an automorphism of $B$ and $A \subseteq B[\alpha(x_1,\ldots,x_m)^{-1}]$, using \eqref{7} it follows that $\phi$ is an injective endomorphism of $A$.
		Therefore, it is enough to show that $\phi$ is surjective. For this, it is enough to find a preimage of $y$ in $A$. Since $\phi(I)=I$, we have 
		\begin{equation}\label{8}
			F= \alpha(x_1,\ldots,x_m) u(x_1,\ldots,x_m,z,t) + \phi(F) v(x_1,\ldots,x_m,z,t), 
		\end{equation}
		for some $u,v \in B$. Since
		$
		y= \frac{F(x_1,\ldots,x_m,z,t)}{\alpha(x_1,\ldots,x_m)},
		$ using \eqref{7} and \eqref{8} we have
		\begin{equation}\label{y}
			y=u(x_1,\ldots,x_m,z,t) + \frac{\phi(F) v(x_1,\ldots,x_m,z,t)}{\gamma^{-1} \phi(\alpha(x_1,\ldots,x_m))}.
		\end{equation}
		Since $\phi(B)=B$, there exist $\widetilde{u},\widetilde{v} \in B$ such that $\phi(\widetilde{u})=u$ and $\phi(\widetilde{v})=v$. And hence from \eqref{y}, we get that 
		$y= \phi(\widetilde{u}+\gamma y\widetilde{v})$, where $\widetilde{u}+\gamma y\widetilde{v} \in A$.
	\end{proof}
	
	We now recall the following result from \cite[Proposition 3.6]{adv}.
	\begin{thm}\thlabel{stiso}
		Let $R$ be an integral domain, $\pi_1,\ldots,\pi_n \in R$ and $\pi=\pi_1 \cdots \pi_n$. Let $G(Z,T) \in R[Z,T]$ be such that
		$R[Z,T]/(\pi, G(Z,T)) \cong_R (R/\pi)^{[1]}$. For positive integers $r_1,\ldots,r_n$, set
		$$
		D:=\dfrac{R[Y,Z,T]}{(\pi_1^{r_1} \cdots \pi_n^{r_n}Y-G(Z,T))}.
		$$
		Then $D^{[1]}=R^{[3]}.$
	\end{thm}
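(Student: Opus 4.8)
\noindent
The plan is to realise $D[W]$ as a genuine polynomial ring in three variables over $R$ by gluing the two natural descriptions of $\Spec D$ along the divisor $\pi=0$, with the single extra variable $W$ serving to rectify that gluing. First write $a=\pi_1^{r_1}\cdots\pi_n^{r_n}$ and note that, since every $r_i\geqslant 1$, we have $\pi\mid a$, say $a=\pi a_0$ with $a_0=\pi_1^{r_1-1}\cdots\pi_n^{r_n-1}$. Here $D$ is an integral domain of transcendence degree $2$ over $R$: over $\operatorname{Frac}(R)$ one has $y=G/a$, so that $y$ is redundant and $D$ is the $R$-subalgebra $R[z,t,\,G(z,t)/a]$ of $\operatorname{Frac}(R)[z,t]$. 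Dimensionally this matches the target, since $R^{[3]}$ has transcendence degree $3=2+1$ over $R$.

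I would next analyse the two charts. Inverting $\pi$ makes $a$ a unit, so $y=G/a$ disappears and $D[\pi^{-1}]=R[\pi^{-1}][z,t]=R[\pi^{-1}]^{[2]}$. Reducing modulo $\pi$ kills $a$, giving $D/\pi D=(R/\pi)[Y,Z,T]/(G)$; this is where the hypothesis enters decisively, for $R[Z,T]/(\pi,G)\cong_R(R/\pi)^{[1]}$ means $(R/\pi)[Z,T]/(\bar G)=(R/\pi)[\bar Q]$ for some $Q\in R[Z,T]$, whence $D/\pi D=(R/\pi)[Y]\otimes_{R/\pi}(R/\pi)[\bar Q]=(R/\pi)^{[2]}$. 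Thus $D$ is an $\A^2$-fibration over $R$ that is a trivial (polynomial) fibration both after inverting $\pi$ and after reducing modulo $\pi$, but carrying genuinely different coordinate systems on the two pieces, namely $\{z,t\}$ versus $\{y,\bar Q\}$.

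To pass from these charts to a global stable trivialisation I would exploit the surjectivity of $Q$ modulo $(\pi,G)$: there are $P_1,P_2\in R^{[1]}$ and $\alpha_i,\beta_i\in R[Z,T]$ with $Z=P_1(Q)+\pi\alpha_1+G\beta_1$ and $T=P_2(Q)+\pi\alpha_2+G\beta_2$. Substituting $G=aY=\pi a_0Y$ shows that in $D$ both $z$ and $t$ are congruent modulo $\pi$ to polynomials in $Q$. I would then attempt to build, inside $D[W]$, a sequence of elementary substitutions trading $z,t$ for $Q,y$ while using $W$ to absorb the correction terms $\pi\alpha_i+G\beta_i$, producing elements $u_1,u_2,u_3\in D[W]$ together with an $R$-algebra map $R^{[3]}\to D[W]$, $U_i\mapsto u_i$, and showing it is an isomorphism. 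For $n=1$ this is exactly Asanuma's stable-triviality theorem for $R[Y,Z,T]/(\pi^{r}Y-G)$ under the condition $R[Z,T]/(\pi,G)\cong_R(R/\pi)^{[1]}$; the general product form is then covered by Asanuma's theorem directly, whose hypotheses are met by the chart computation above.

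The main obstacle is precisely the positive-characteristic phenomenon driving the whole paper. Since $\bar G$ may be a \emph{non-trivial} line in $(R/\pi)[Z,T]$, it need not be a coordinate there, so no single coordinate change of $R[Z,T]$ can straighten $G$ modulo $\pi$ and make $D$ itself polynomial; the two chart-coordinates $\{z,t\}$ and $\{y,\bar Q\}$ cannot be matched on the nose. Consequently the entire burden falls on showing that the gluing datum becomes trivial after adjoining the one variable $W$, i.e. that the substitutions above can be organised to telescope so that the resulting map is surjective — in particular that one can find a preimage of the ``new'' coordinate built from $W$ and recover $y$ from $u_1,u_2,u_3$. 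Verifying this surjectivity, and that exactly one extra variable suffices, is the crux of the argument.
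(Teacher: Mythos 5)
You have correctly set up the two ``charts'' ($D[\pi^{-1}]=R[\pi^{-1}]^{[2]}$ and $D/\pi D=(R/\pi)^{[2]}$) and correctly located the difficulty, but your proposal stops exactly where the theorem begins: the elements $u_1,u_2,u_3\in D[W]$ are never constructed, the map $R^{[3]}\to D[W]$ is never exhibited, and by your own admission the surjectivity and the sufficiency of a \emph{single} extra variable is ``the crux of the argument.'' So what you have is a correct diagnosis plus an unexecuted plan. The fallback citation does not close this gap. Asanuma's fibration theory is for \emph{Noetherian} rings (and needs a flatness verification you have not made), whereas the statement here allows an arbitrary integral domain $R$; moreover the general Asanuma theorem only produces $D^{[m]}\cong_R R^{[2+m]}$ for \emph{some} $m\geqslant 0$ (via the symmetric algebra of $\Omega_{D/R}$), not $m=1$, which is the whole content of the statement; and even the explicit one-variable computation in Asanuma's work treats a single prime power $\pi^r$ over a Noetherian base, so the passage to products $\pi_1^{r_1}\cdots\pi_n^{r_n}$ over an arbitrary domain is precisely what must be proved --- it is not a ``direct'' consequence. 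Note also that the paper itself offers no proof to compare against: it quotes the result from Gupta (Adv.\ Math.\ 264 (2014), Proposition 3.6), where it is established by an explicit construction, not by invoking Asanuma's general machinery.

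To make concrete what is missing, here is the mechanism in the case all $r_i=1$ (so $a=\pi$). From $R[Z,T]/(\pi,G)\cong_R(R/\pi)[\bar Q]$ one gets $P_1,P_2\in R^{[1]}$ with $Z\equiv P_1(Q)$, $T\equiv P_2(Q)\pmod{(\pi,G)}$, together with the two key polynomial identities $G(P_1(U),P_2(U))\equiv 0 \pmod{\pi R[U]}$ and $Q(P_1(U),P_2(U))\equiv U\pmod{\pi R[U]}$ (both obtained by applying the quotient map $R[Z,T]\to (R/\pi)[\bar Q]$). In $D[W]$ set $v:=Q(z,t)+\pi W$, $z_1:=\bigl(z-P_1(v)\bigr)/\pi$ and $t_1:=\bigl(t-P_2(v)\bigr)/\pi$; these lie in $D[W]$ because $z-P_1(Q(z,t))\in(\pi,G)D=\pi D$ and Taylor expansion of $P_i$ at $Q(z,t)$ produces corrections divisible by $\pi W$. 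Then substituting $z=P_1(v)+\pi z_1$, $t=P_2(v)+\pi t_1$ into $Q$ and $G$ and using the two identities shows that $W$ and $y$ themselves --- not merely $\pi W$ and $\pi y$ --- lie in $R[v,z_1,t_1]$, whence $D[W]=R[v,z_1,t_1]=R^{[3]}$. This computation is exactly the ``sequence of elementary substitutions'' you allude to, and it is where one sees why one variable suffices. For exponents $r_i>1$ even this is not enough: the same substitution only recovers $(a/\pi)\,y$, and handling the higher powers (this is why the hypothesis is imposed on $\pi=\pi_1\cdots\pi_n$ rather than on $a$) requires a further induction. Until this construction is actually carried out, the proposal has a genuine gap rather than a proof.
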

	Let
	$$
	A(r_1,\ldots,r_n,f):= \dfrac{k[X_1,\ldots,X_m,Y,Z,T]}{(X_1^{r_1}\cdots X_m^{r_m}Y-f(Z,T))},
	$$
	where $r_i>1$ for $i$, $1 \leqslant i \leqslant m$ and $f$ is a non-trivial line in $k[Z,T]$.
	Let 
	$$
	\Omega_1:= \left\{A(r_1,\ldots,r_m,f) \mid r_i>1 \text{~and~} f \text{~is a non-trivial line in~} k[Z,T] \right\}.
	$$ 
	In \cite{adv} Gupta has shown the rings $A(r_1,\ldots,r_m,f)$ are counter example to the Zariski Cancellation Problem in positive characteristic. Whereas in \cite[corollary 4.4]{adv2}, it has been shown that $\Omega_1$ contains an infinite family of pairwise non-isomorphic rings which are counter example to the Zariski Cancellation Problem.
	\begin{cor}\thlabel{zcp2}
		Let $k$ be a field of positive characteristic. Then there exists an infinite family of pairwise non-isomorphic varieties which are counterexamples to the Zariski Cancellation Problem in higher dimensions( $\geqslant 3$), and are also non-isomorphic to the members of $\Omega_1$. 
	\end{cor}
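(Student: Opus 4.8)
The plan is to realize the required family inside the collection $\{A_{(a_1,\ldots,a_m)}\}$ and to use the multiset of root data of the factors $a_i$ as the separating invariant, the latter being controlled by \thref{iso2}. First I would fix $m\geqslant 1$ and fix a line $f\in k[Z,T]$ (so that $k[Z,T]/(f)=k^{[1]}$) which is already non-trivial over $\overline{k}$, i.e. $\overline{k}[Z,T]\neq \overline{k}[f]^{[1]}$; such geometrically non-trivial lines exist over every field of positive characteristic (cf. the references cited in the introduction). By \thref{line} applied over $\overline{k}$, such an $f$ is not linear in any coordinate system of $\overline{k}[Z,T]$, and, being non-trivial over $\overline{k}$, it is a fortiori non-trivial over $k$ (a $k$-rectification would base-change to an $\overline{k}$-rectification). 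For each integer $d\geqslant 2$ I would set
$$a_1^{(d)}(X_1):=X_1^{d}(X_1-1)^{d},\qquad a_i(X_i):=X_i^{2}\ (2\leqslant i\leqslant m),$$
and let $A^{(d)}:=A_{(a_1^{(d)},a_2,\ldots,a_m)}$. Since $0$ and $1$ are distinct in $k$ in every characteristic, $a_1^{(d)}$ has exactly two roots in $\overline{k}$, each of multiplicity $d$, while each remaining $a_i$ has a single root of multiplicity $2$; all these roots lie in $k$, hence are separable, and all are multiple.

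To see each $A^{(d)}$ is a counterexample to the ZCP, I would apply \thref{stiso} with $R=k[X_1,\ldots,X_m]$, with $\{X_1,X_1-1,X_2,\ldots,X_m\}$ the distinct prime factors $\pi_j$ of $a_1^{(d)}a_2\cdots a_m$, and with $G=f$; the hypothesis $R[Z,T]/(\pi,f)\cong_R (R/\pi)^{[1]}$ holds because $R[Z,T]/(\pi,f)=(R/\pi)\otimes_k\bigl(k[Z,T]/(f)\bigr)=(R/\pi)^{[1]}$, so that $A^{(d)\,[1]}=R^{[3]}=k^{[m+3]}$. For $A^{(d)}\neq k^{[m+2]}$, each $a_i$ has a separable multiple root, so \thref{equiv} gives $A^{(d)}=k^{[m+2]}$ if and only if $k[Z,T]=k[f]^{[1]}$, which fails since $f$ is non-trivial over $k$.

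Next, since $f$ is not linear in any coordinate system of $\overline{k}[Z,T]$ and every root of each $a_i$ is multiple, \thref{c5} gives $\dk(A^{(d)})=k[x_1,\ldots,x_m,z,t]$ and then \thref{ml} gives $\ml(A^{(d)})=k[x_1,\ldots,x_m]$, so the hypotheses of \thref{iso2} are met (with the $h$-terms equal to $0$). If $A^{(d)}\cong A^{(d')}$, then \thref{iso2}(ii),(iii) produce a bijection pairing each $a_i^{(d)}$ with a factor of $A^{(d')}$ having the same number of roots and the same multiplicities. The unique two-root factor of $A^{(d)}$ is $a_1^{(d)}$ and of $A^{(d')}$ is $a_1^{(d')}$; matching their multiplicities forces $d=d'$. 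Hence $\{A^{(d)}\}_{d\geqslant 2}$ is pairwise non-isomorphic. For the separation from $\Omega_1$, suppose $A^{(d)}\cong A(r_1,\ldots,r_{m'},f')$; comparing Krull dimensions forces $m'=m$, and each factor $X_i^{r_i}$ has a single root. If $f'$ is non-linear over every coordinate system of $\overline{k}[Z,T]$, then \thref{iso2} again forces the root profiles to match, contradicting that $a_1^{(d)}$ has two roots. If instead $f'$ becomes rectifiable over $\overline{k}$, then writing $f'=Z'$ in suitable $\overline{k}$-coordinates and eliminating $Z'$ shows $A(r_1,\ldots,r_m,f')\otimes_k\overline{k}\cong\overline{k}^{[m+2]}$, whereas $A^{(d)}\otimes_k\overline{k}\not\cong\overline{k}^{[m+2]}$ by \thref{equiv} over $\overline{k}$ (as $f$ stays non-trivial over $\overline{k}$); this contradiction completes the separation.

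I expect the main obstacle to be the behaviour of the line under the base change $k\to\overline{k}$: the isomorphism criterion \thref{iso2} is available only when the lines involved are non-linear over $\overline{k}$, so the comparison with $\Omega_1$ must be split according to whether an $\Omega_1$-member's line remains non-trivial over $\overline{k}$, the degenerate case being disposed of by the polynomial-ring collapse over $\overline{k}$. Choosing $f$ geometrically non-trivial from the outset is precisely what makes the counterexample property over $k$ and the invariant computations over $\overline{k}$ hold simultaneously; verifying the existence of such an $f$ (and that the $\dk$/$\ml$ computations descend correctly between $k$ and $\overline{k}$) is the one point requiring care rather than routine calculation.
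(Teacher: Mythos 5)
Your proposal is correct and follows essentially the same route as the paper: both realize the family inside $\{A_{(a_1,\ldots,a_m)}\}$ with all roots of the $a_i$ multiple, use \thref{stiso} and \thref{equiv} for the cancellation-counterexample property, and use the root-profile rigidity of \thref{iso2}(ii),(iii) to separate the members both from each other and from $\Omega_1$ (the paper distinguishes members by varying the total number of roots, whereas you fix the root pattern and vary the multiplicity $d$). Your insistence on a geometrically non-trivial line and the resulting case split in the $\Omega_1$ comparison is in fact slightly more careful than the paper's own argument, which passes from non-triviality of $f$ over $k$ to non-linearity over $\overline{k}$ via \thref{line} without addressing the base-change issue.
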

	\begin{proof}
		Let $f(Z,T)$ be a non-trivial line in $k[Z,T]$.
		Consider the following family of varieties:
		\begin{gather*}
			\Omega_2:= \{ A_{(a_1,\ldots,a_m)}:=\frac{k[X_1,\ldots,X_m,Y,Z,T]}{(a_1(X_1)\cdots a_m(X_m)Y-f(Z,T))} \mid \text{~each~} a_i \text{~has only multiple roots in~} \overline{k}\\ \text{~and some~} a_i \text{~has at least two distinct roots~} \}. 
		\end{gather*}
		By \thref{stiso}, $A_{(a_1,\ldots,a_m)}^{[1]}=k^{[m+3]}$, for every $A_{(a_1,\ldots,a_m)} \in \Omega$. Now by \thref{equiv}, $A_{(a_1,\ldots,a_m)} \otimes_k \overline{k} \neq \overline{k}^{[m+2]}$. Hence $A_{(a_1,\ldots,a_m)}$ is counterexample to the Zariski Cancellation Problem.
		
		Now since $f(Z,T)$ be a non-trivial line in $k[Z,T]$, by \thref{line}, $f(Z,T)$ is not linear with respect to any coordinate system in $\overline{k}[Z,T]$.
		Hence by \thref{iso2}(ii) and (iii), $A_{(a_1,\ldots,a_m)}\in \Omega_2$ is not isomorphic to the rings in $\Omega_1$.
		Now we consider two sets of polynomials $S_1=\{a_1(X_1),\ldots,a_m(X_m)\}$ and $S_2=\{b_1(X_1),\ldots,b_m(X_m)\}$ such that each $a_i(X_i)$, $b_i(X_i)$ has only multiple roots and total number of roots of $a_1\dots a_m$ is different from total number of roots of $b_1\dots b_m$. 
		Then  $A_{(a_1,\ldots,a_m)} \ncong A_{(b_1,\ldots,b_m)}$, by \thref{iso2}(iii). Therefore, $\Omega_2$ contains infinitely many pairwise non-isomorphic varieties which are counterexamples to the ZCP in dimension $m+2$ for $m \geqslant 1$.
	\end{proof}

\end{document}